\newtheorem{theorem}{Theorem}
\newtheorem{proposition}{Proposition}[section]
\newtheorem{lemma}[proposition]{Lemma}
\theoremstyle{definition}
\newtheorem{definition}[proposition]{Definition}
\newtheorem{openproblem}{Open problem}
\theoremstyle{remark}
\numberwithin{equation}{section}
\newcommand{\R}{\mathbf{R}}
\newcommand{\Hei}{\mathbf{H}}
\newcommand{\st}{\: :\:}
\newcommand{\dualprod}[2]{\langle {#1},{#2}\rangle}
\newcommand{\abs}[1]{\lvert {#1}\rvert}
\newcommand{\Norm}[1]{\lVert {#1}\rVert}
\newcommand{\dif}{\,\mathrm{d}}
\DeclareMathOperator{\Div}{div}
\DeclareMathOperator{\Sym}{Sym}
\DeclareMathOperator{\Ad}{Ad}
\title{Subelliptic Bourgain--Brezis estimates on groups}
\author{Sagun Chanillo}
\address{Department of Mathematics -- Hill Center\\
Rutgers, The State University of New Jersey\\
110 Frelinghuysen Rd.\\
Piscataway, NJ 08854-8019}
\email{chanillo@math.rutgers.edu}
\author{Jean Van Schaftingen}
\address{D\'epartment de Math\'ematique\\
Universit\'e catholique de Louvain\\
Chemin du Cyclotron 2\\
1348 Louvain-la-Neuve\\
Belgium}
\email{Jean.VanSchaftingen@uclouvain.be}
\date{\today}
\subjclass[2000]{Primary 26D15; Secondary 35B65, 35H20, 43A80, 46E35}
\keywords{subelliptic estimates, critical Sobolev space, Sobolev--Slobodetski\u\i{} spaces,  divergence-free vector field, homogeneous group, stratified Lie algebra}
\begin{document}

\begin{abstract}
We show that divergence-free $\mathrm{L}^1$ vector fields on a homogeneous group of homogeneous dimension $Q$ are in the dual space of functions whose gradient is in $\mathrm{L}^Q$. This was previously obtained on $\R^n$ by Bourgain and Brezis.
\end{abstract}

\maketitle

\tableofcontents

\section{Introduction}

On $\R^n$, the Sobolev embedding theorem states that $W^{1,q}(\R^n) \subset \mathrm{L}^{\frac{nq}{n-q}}(\R^n)$ when $q < n$. When $q=n$, the embedding of $W^{1,n}(\R^n)$ in $\mathrm{L}^\infty(\R^n)$ that is suggested by homogeneity arguments of the critical Sobolev space is known to fail. 
However, maps in the critical Sobolev space have many properties in common with bounded or continuous maps. An example of such a property was obtained by Bourgain and Brezis, who showed that divergence-free vector fields see $W^{1,n}$ functions as if they were bounded functions \cite{BourgainBrezis2004,BourgainBrezis2007}; that is, if $F \colon \R^n \to \R^n$ is a divergence-free vector field, one has
\begin{equation}
\label{ineqBourgainBrezis}
 \Bigl\lvert\int_{\R^n} \varphi \cdot F \Bigr\rvert \le C \Norm{F}_{\mathrm{L}^1(\R^n)} \Norm{\nabla \varphi}_{\mathrm{L}^n(\R^n)}\;.
\end{equation}
A striking consequence of this fact is that if $U$ is the solution
\[
  -\Delta U=F\;,
\]
given by convolution with the Newton kernel, then $\nabla U \in \mathrm{L}^{n/(n-1)}(\R^n)$, whereas without the condition on the divergence, the best result that can be obtained is that $\nabla U$ belongs to weak $\mathrm{L}^{n/(n-1)}$, the Marcinkiewicz space $\mathrm{L}^{n/(n-1),\infty}(\R^n)$.
The proof of Bourgain and Brezis relies on a Littlewood--Paley decomposition, and yields in fact a necessary and sufficient condition on the divergence of an $\mathrm{L}^1$ vector field for this vector field to induce a linear functional on the homogeneous Sobolev space $\mathrm{\dot{W}}^{1,n}(\R^n)$. 
The estimates on $\R^n$ can be transported on smooth domains \cite{BrezisVanSchaftingen2007} or on manifolds. 

The present paper starts from the question whether similar estimates hold without the local structure of the commutative group $\R^n$, and gives an answer on homogeneous groups. A homogeneous group $G$ is a connected and simply connected Lie group such that the Lie algebra $\mathfrak{g}$ of left-invariant vector fields is a graded, nilpotent and stratified Lie algebra, that is
\begin{enumerate}
\item $\mathfrak{g}=V_1 \oplus V_2 \oplus \dots \oplus V_p$,
\item $[V_i,V_j] \subset V_{i+j}$ for $i+j \le p$ and $[V_i,V_j]=\{0\}$ if $i+j > p$,
\item $V_1$ generates $\mathfrak{g}$ by Lie brackets.
\end{enumerate}
While the dimension of $G$ as a manifold is $n=\sum_{j=1}^p m_j$,
where $m_j =\dim V_j$, the homogeneous dimension $Q=\sum_{j=1}^p j m_j$ plays an essential role. In particular when $q < Q$, it was shown that \cite{Folland1975,FollandStein1974,RotschildStein1976}
\[
  S^{1,q}(G)=\{ u \in \mathrm{L}^q(G) \st Y_iu \in \mathrm{L}^q(G) \text{ for }1 \le i \le m \} \subset \mathrm{L}^{\frac{Qq}{Q-q}}(G)\;,
\]
where $\{Y_i\}_{i=1}^{m}$ is a basis of $V_1$ and the measure used to define $\mathrm{L}^q(G)$ is the left- and right-invariant Haar measure $\mu$ on $G$.

In this paper, we show that functions in $\mathrm{\dot{S}}^{1,Q}(G)$ are seen like bounded functions by divergence-free $\mathrm{L}^1$ vector fields. Before defining these, we define the bundle $T_bG$ by restricting the vectors to be in $V_1$. The vector-field $F$ is divergence-free if
\[
 \int_{G} F \psi \dif  \mu=0\;,
\]
for every compactly supported smooth function $\psi \in C^\infty_c(G)$. We finally use the notation $\nabla_b u=(Y_1 u,\dots,Y_{m}u)$. Our main result is:

\begin{theorem}
\label{mainTheorem}
If $\varphi \in C^\infty_c(G,T^*_bG)$ is a section of the cotangent bundle and the vector field $F \in \mathrm{L}^1(G;T_bG)$ is divergence-free, then
\[
  \Bigl\lvert\int_{G} \dualprod{\varphi}{F} \dif \mu \Bigr\rvert\le C \Norm{F}_{\mathrm{L}^1(G)}\Norm{\nabla_b \varphi}_{\mathrm{L}^Q(G)}\;.
\]
\end{theorem}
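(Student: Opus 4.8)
The plan is to reduce the inequality, by an integration by parts against the fundamental solution of the horizontal sub-Laplacian, to an $\mathrm{L}^{Q/(Q-1)}$ bound on the horizontal gradient of the Newtonian potential of $F$, and then to establish that bound by adapting the Littlewood--Paley argument of Bourgain and Brezis to the subelliptic setting. Write $\mathcal{L}=-\sum_{i=1}^{m}Y_i^2$ for the canonical hypoelliptic sub-Laplacian on $G$; by Folland's theorem it has a fundamental solution homogeneous of degree $2-Q$ and smooth away from the identity, and we write $\mathcal{L}^{-1}$ for convolution with it. After the standard regularizations (mollifying $F$ on the appropriate side, which preserves the divergence-free condition because it amounts to convolving with a left-invariant vector field), one has, for $\varphi\in C^\infty_c(G,T^*_bG)$,
\[
  \int_{G}\dualprod{\varphi}{F}\dif\mu=\int_{G}\dualprod{\varphi}{\mathcal{L}\mathcal{L}^{-1}F}\dif\mu=\sum_{i=1}^{m}\int_{G}\dualprod{Y_i\varphi}{Y_i\mathcal{L}^{-1}F}\dif\mu,
\]
since the $Y_i$ are skew-adjoint for the Haar measure. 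By the Cauchy--Schwarz and Hölder inequalities this reduces Theorem~\ref{mainTheorem} to the estimate $\Norm{\nabla_b\mathcal{L}^{-1}F}_{\mathrm{L}^{Q/(Q-1)}(G)}\le C\Norm{F}_{\mathrm{L}^1(G)}$ for horizontally divergence-free $F$.

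Now $\nabla_b\mathcal{L}^{-1}$ is convolution with a kernel homogeneous of degree $1-Q$ and smooth away from the identity, so by the fractional integration theorem on homogeneous groups it maps $\mathrm{L}^1(G)$ into the weak space $\mathrm{L}^{Q/(Q-1),\infty}(G)$; without the hypothesis on the divergence this is sharp, and the heart of the matter is that the divergence-free condition upgrades the weak estimate to the strong one. This cannot be obtained from Hardy-space theory: one could hope to write $F$ as an $\ell^1$-summable superposition of divergence-free atoms supported in metric balls, which would suffice atom by atom --- a divergence-free vector field compactly supported in a ball has vanishing integral over $G$, and this extra cancellation makes $\nabla_b\mathcal{L}^{-1}$ of such an atom uniformly bounded in $\mathrm{L}^{Q/(Q-1)}$ --- but divergence-free $\mathrm{L}^1$ vector fields need not lie in $\mathcal{H}^1$, so no such decomposition exists; were it otherwise the theorem would follow at once from $\mathcal{H}^1$--$\mathrm{BMO}$ duality and the subelliptic embedding $\mathrm{\dot{S}}^{1,Q}(G)\subset\mathrm{BMO}(G)$. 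One must therefore run the actual argument of Bourgain and Brezis. Decompose $F=\sum_j F_j$ by a Littlewood--Paley decomposition adapted to $\mathcal{L}$, for instance $F_j=\psi(2^{-2j}\mathcal{L})F$ for a suitable $\psi$, or pieces built from the heat semigroup $\mathrm{e}^{-t\mathcal{L}}$. Bernstein-type inequalities on $G$ --- a function spectrally localized at scale $2^{2j}$ for $\mathcal{L}$ costs a factor $2^{jQ}$ from $\mathrm{L}^1$ to $\mathrm{L}^\infty$ --- together with the $2^{-2j}$ smoothing of $\mathcal{L}^{-1}$ and interpolation give $\Norm{\nabla_b\mathcal{L}^{-1}F_j}_{\mathrm{L}^{Q/(Q-1)}}\le C\Norm{F}_{\mathrm{L}^1}$ for each $j$; this is scale invariant but not summable, and the divergence-free condition must supply the missing decay in the telescoped sum, exactly as in the Euclidean case where the part of $F_j$ that produces the logarithmic divergence is removed by $\operatorname{div}F=0$.

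The main obstacle is carrying out this last step on $G$, where the Fourier transform is replaced by the spectral calculus of $\mathcal{L}$. The spectral multipliers $\psi(2^{-2j}\mathcal{L})$ do not commute with the horizontal fields $Y_i$, so one must control the commutators $[Y_i,\psi(2^{-2j}\mathcal{L})]$; this is possible because $[Y_i,\mathcal{L}]$ is a first-order operator with polynomial coefficients and the operators at hand have kernels with Gaussian off-diagonal decay together with all their horizontal derivatives, by the heat kernel estimates of Folland--Stein, Jerison, and Varopoulos--Saloff-Coste--Coulhon, but it forces every Fourier-support argument to be replaced by a quantitative kernel estimate. For the same reason the kernels of $\psi(\mathcal{L})$ are not exactly homogeneous, only Gaussian-bounded at scale $2^{-j}$, and --- since there is no scalar stream function on $G$ --- the precise mechanism by which the divergence-free condition removes the divergent part of the sum has to be rebuilt intrinsically. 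This is where the bulk of the work lies.
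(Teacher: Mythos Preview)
Your reduction to the estimate $\Norm{\nabla_b\mathcal{L}^{-1}F}_{\mathrm{L}^{Q/(Q-1)}}\le C\Norm{F}_{\mathrm{L}^1}$ is correct (indeed this is exactly Theorem~\ref{theoremRegularity}, which the paper derives \emph{from} Theorem~\ref{mainTheorem} rather than the other way around), and your discussion of why Hardy-space atomic decomposition fails is accurate. But the proposal is not a proof: it is a programme whose decisive step is explicitly left open. You write that the mechanism by which the divergence-free condition removes the divergent part of the Littlewood--Paley sum ``has to be rebuilt intrinsically'' and that ``this is where the bulk of the work lies'' --- and then stop. On $\R^n$ the Bourgain--Brezis argument hinges on the algebraic identity $\xi\cdot\widehat{F}(\xi)=0$, which lets one replace the dangerous component of $\widehat{F}_j$ by the others; there is no replacement for this on a stratified group via the spectral calculus of $\mathcal{L}$, since the spectral projectors neither commute with the $Y_i$ nor see the divergence constraint in any algebraically usable way. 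The commutator and kernel estimates you mention are available, but they do not by themselves produce the summability; you would still need a structural identity tying $\Div_b F=0$ to the spectral pieces, and none is supplied.

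The paper avoids all of this by taking a completely different route, following the elementary method of \cite{VanSchaftingen2004} rather than the Littlewood--Paley proof of \cite{BourgainBrezis2004,BourgainBrezis2007}. One slices $G$ along cosets of the codimension-one normal subgroups $G_i=\exp\mathfrak{g}_i$, and on each slice splits $\varphi^i=(\varphi^i-\varphi^i_\lambda)+\varphi^i_\lambda$ into a small $\mathrm{L}^\infty$ piece and a piece with controlled $\nabla_b$, the latter handled via the divergence-free condition by a single integration by parts in the transversal direction. The new difficulty over $\R^n$ is that restricting to $G_i$ loses a horizontal direction, so the approximation $\varphi^i_\lambda$ cannot be built from derivatives tangent to the slice alone; the paper's Lemma~\ref{lemmaApproximation} constructs it by group convolution using information from a full neighbourhood of the slice, and the required bounds are obtained with Jerison's device of converting left-invariant derivatives into right-invariant ones (Lemma~\ref{lemmaJerison31p}). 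The output is controlled not by $\Norm{\nabla_b\varphi}_{\mathrm{L}^Q}$ directly but by the Hardy--Littlewood maximal function of the slice-wise $\mathrm{L}^Q$ norm, which is enough after H\"older and the maximal theorem on $\R$. No spectral calculus, no commutator estimates, and the divergence-free condition enters through one elementary integration by parts rather than through any frequency-space cancellation.
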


The proof uses the strategy developed by the second author to give an elementary proof of \eqref{ineqBourgainBrezis} \cite{VanSchaftingen2004}. 
That proof relied on splitting the integral on hyperplanes, and using H\" older continuity of the restriction of $\varphi$ on hyperplanes. One could then split $\varphi$ into one part which is bounded and another whose gradient is bounded.
The estimate on the latter relied on the divergence-free condition. One concluded then by H\" older's inequality. 

In the setting of homogeneous groups, hyperplanes are replaced by cosets of codimension $1$ normal subgroups. 
While on $\R^n$ the splitting of $\varphi$ on hyperplanes only used derivatives of $\varphi$ in directions parallel to the hyperplane,  on a homogeneous group using only the directions of $V_1$ parallel to the normal subgroups is not sufficient to have the right esimates for the splitting. In order to circumvent this problem, our spliting relies on information about all the derivatives of $\varphi$ in some neighbourhood of the normal subgroup. 
The splitting estimates are then obtained with Jerison's machinery for analysis on homogeneous groups \cite{Jerison1986}, and depend now on some maximal function associated to $\varphi$.

As a consequence of Theorem~\ref{mainTheorem}, we give a regularity result for the subelliptic Laplacian
$\Delta_b=\sum_{i=1}^{m} Y_i^2$.

\begin{theorem}
\label{theoremRegularity}
If $F \in \mathrm{L}^1(G,T_bG)$ is divergence-free, then the problem
\[
  -\Delta_b U=F
\]
has a solution $U \in \mathrm{\dot{S}}^{1,Q/(Q-1)}$ satisfying the estimate
\[
  \Norm{\nabla_b U}_{\mathrm{L}^{Q/(Q-1)}} \le C \Norm{F}_{\mathrm{L}^1(G)}\;.
\]
\end{theorem}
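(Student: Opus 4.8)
The plan is to deduce Theorem~\ref{theoremRegularity} from Theorem~\ref{mainTheorem} by duality, the only further ingredient being the classical $\mathrm{L}^p$-theory, $1<p<\infty$, of the subelliptic Laplacian on a homogeneous group \cite{Folland1975,FollandStein1974}. Write $q=Q/(Q-1)$ and let $\Gamma$ be the fundamental solution of $-\Delta_b$ on $G$: it is smooth off the identity, homogeneous of degree $2-Q$, and locally integrable in the generic case $Q>2$; any second-order derivative of $\Gamma$ is then homogeneous of degree $-Q$ and, with the cancellation it inherits, is a Calder\'on--Zygmund convolution kernel on $G$, so the operators $Y_j(-\Delta_b)^{-1}Y_k$ are bounded on $\mathrm{L}^p(G)$.

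\emph{Step 1 (a primitive of $F$ in $\mathrm{L}^q$).} I first produce $W\in\mathrm{L}^q(G)$, with values in the bundle of $m\times m$ matrices over $T_bG$ relative to the left-invariant frame $Y_1,\dots,Y_{m}$, such that its $i$-th row $W_i$ satisfies $\Div_b W_i=-F_i$ in the sense of distributions and $\Norm{W}_{\mathrm{L}^q(G)}\le C\Norm{F}_{\mathrm{L}^1(G)}$. Indeed, consider the subspace $X=\{\nabla_b\varphi\st\varphi\in C^\infty_c(G;T^*_bG)\}\subset\mathrm{L}^Q(G)$. Since $V_1$ generates $\mathfrak g$, a compactly supported function annihilated by every $Y_j$ vanishes, so $\nabla_b\varphi\mapsto\int_G\dualprod{\varphi}{F}\dif\mu$ is a well-defined linear functional on $X$, of norm at most $C\Norm{F}_{\mathrm{L}^1(G)}$ by Theorem~\ref{mainTheorem}. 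The Hahn--Banach theorem and the identification $(\mathrm{L}^Q)^*=\mathrm{L}^q$ then give $W$ with $\int_G\dualprod{\varphi}{F}\dif\mu=\int_G\dualprod{\nabla_b\varphi}{W}\dif\mu$ for every $\varphi\in C^\infty_c(G;T^*_bG)$; integrating by parts, and using that the fields $Y_j$ are skew-adjoint for the bi-invariant Haar measure $\mu$, this identity is exactly $\Div_b W_i=-F_i$.

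\emph{Step 2 (solving the equation).} I then take $U=(U_1,\dots,U_{m})$, where $U_i$ is the solution of $-\Delta_b U_i=F_i$ obtained by convolving $F_i$ with $\Gamma$, as in the Euclidean case recalled in the introduction. Since $F_i=-\Div_b W_i$, each derivative $Y_jU_i$ is obtained from $W_i$ through a convolution operator whose kernel is, up to a bounded multiple of a point mass, a second-order derivative of $\Gamma$, hence a Calder\'on--Zygmund kernel on $G$; by the theory recalled above this operator is bounded on $\mathrm{L}^q(G)$. Therefore $\nabla_b U\in\mathrm{L}^q(G)$ and $\Norm{\nabla_b U}_{\mathrm{L}^q(G)}\le C\Norm{W}_{\mathrm{L}^q(G)}\le C\Norm{F}_{\mathrm{L}^1(G)}$, so that $U\in\mathrm{\dot{S}}^{1,Q/(Q-1)}$, which is the assertion.

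Granting Theorem~\ref{mainTheorem}, this deduction is essentially formal, and I do not expect a genuinely new analytic obstacle: the two substantive facts --- the existence, smoothness and homogeneity of $\Gamma$ and the $\mathrm{L}^p$-boundedness of the attached singular integrals --- are standard on homogeneous groups. The points that do need attention are the non-commutative bookkeeping in Step~2 (so that $Y_jU_i$ is correctly expressed through mixed left- and right-invariant second derivatives of $\Gamma$, the convolution identities being applied on the appropriate side), and the degenerate cases $Q\le2$, in which $G$ is $\R$ or $\R^2$ and the statement is either vacuous or reduces to the one of Bourgain and Brezis.
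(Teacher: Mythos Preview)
Your proposal is correct and follows essentially the same route as the paper: both use Theorem~\ref{mainTheorem} (via duality/Hahn--Banach) to express each $F_i$ as a horizontal divergence of an $\mathrm{L}^{Q/(Q-1)}$ field, and then apply the Calder\'on--Zygmund theory for second derivatives of the fundamental solution of $-\Delta_b$ on homogeneous groups. The paper's version is simply terser, stating directly that Theorem~\ref{mainTheorem} yields $F_i=\sum_k Y_k h_{ki}$ with $\Norm{h_{ki}}_{\mathrm{L}^{Q/(Q-1)}}\le C\Norm{F}_{\mathrm{L}^1}$ and omitting the Hahn--Banach step and your remarks on the $Q\le 2$ and non-commutative bookkeeping points.
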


On $\R^n$, Bourgain and Brezis have also proved that if $F \in \mathrm{L}^1(\R^n;\R^n)$, one has $F \in \mathrm{\dot{W}}^{-1,n/(n-1)}(\R^n)$ if and only if $\Div f \in \mathrm{\dot{W}}^{-2,n/(n-1)}(\R^n)$. In view of Theorem~\ref{mainTheorem}, we ask the question whether their result extends to homogeneous groups.

\begin{openproblem}
Let $F \in \mathrm{L}^1(G;T_bG)$ be a vector field. Does one have $F \in \dot{\mathrm S}^{-1,Q/(Q-1)}(G;T_bG)$ if and only if $\Div_b F \in \dot{\mathrm S}^{-2,Q/(Q-1)}(G)$?
\end{openproblem}

The rest of this paper is organized as follows. In section~\ref{sectionApproximation}, we state and prove Lemma~\ref{lemmaApproximation} about the approximation of a function $u \in \mathrm{\dot{S}}^{1,Q}(G)$ on a normal subgroup $G_i$ of $G$. This lemma is the main new ingredient for the proof of Theorem~\ref{mainTheorem}, which is the object of section~\ref{sectionMainProof}. In a short section~\ref{sectionRegularity}, we show how the combination of Theorem~\ref{mainTheorem} with classical regularity estimates on homogeneous groups leads to Theorem~\ref{theoremRegularity}. In the last section~\ref{sectionFurther}, we give generalizations of Theorem~\ref{mainTheorem} in several directions: $\mathrm{L}^1$--divergence vector fields, critical fractional  Sobolev spaces, and higher order conditions.

The research of S.C.\ was supported in part by a grant from the NSF; the research of J.V.S.\ was supported in part by a grant of the Fonds de la Recherche Scientifique--FNRS.

\section{Approximation on normal subgroups}

\label{sectionApproximation}

In order to prove Theorem~\ref{mainTheorem}, we slice $G$ into cosets of codimension $1$ normal subgroups that are  constructed as follows. Fix $1\le i \le m$, let $\mathfrak{g}_i$ be the linear space spanned by $\{Y_j\}_{j \ne i}$ and by $V_\ell$, $2 \le \ell \le p$, and let $G_i$ be the image of $\mathfrak{g}_i$ by the exponential map. Since $\mathfrak{g}$ is graded, $\mathfrak{g}_i$ is an ideal of $\mathfrak{g}$, and $G_i$ is a normal subgroup of $G$. Since $G$ is simply-connected, one has $G/G_i \cong \R$. The Haar measure $\nu$ on $G_i$ is normalized so that
\[
 \mu(A)=\int_\R \nu(G_i \cap e^{-tY_i}A)\dif t\;.
\]

\begin{lemma}
\label{lemmaApproximation}
There exists $C >0$ such that, for every $u \in C^\infty(G)$, $\lambda > 0$ and $1 \le i \le m$, there exists $u_\lambda \in C^\infty(G)$ such that 
\begin{align}
  \label{ineqLinftyApprox} \Norm{u-u_\lambda}_{\mathrm{L}^\infty(G_i)} &\le C \lambda^{\frac{1}{Q}}M(I)(0)\;,\\
  \label{ineqLipschitzApprox} \Norm{\nabla_b u_\lambda}_{\mathrm{L}^\infty(G)} &\le C \lambda^{\frac{1}{Q}-1} M(I)(0)\;,
\end{align}
where
\[
  I(t)=\Bigl(\int_{G_i} \abs{\nabla_b u(e^{tY_i}h)}^Q\dif \nu(h)\Bigr)^\frac{1}{Q}
\]
and $M(I)$ is the Hardy--Littlewood maximal function of $I$.
\end{lemma}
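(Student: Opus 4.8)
\emph{Proof strategy.} The plan is to take $u_\lambda$ to be a mollification of $u$ at a scale that equals $\lambda$ on $G_i$ and grows like the distance to $G_i$ away from it. Concretely, let $\tau\colon G\to\R$ be the smooth group homomorphism with $\tau(e^{tY_i}h)=t$ for $h\in G_i$; it is well defined because $G_i$ is normal in $G$ with $G/G_i\cong\R$, and one checks $Y_i\tau\equiv 1$ and $Y_j\tau\equiv 0$ for $j\neq i$, so $\abs{\nabla_b\tau}\equiv 1$. Fixing a homogeneous quasidistance $\rho$ on $G$, one has $\rho(x,G_i)\asymp\abs{\tau(x)}$ and $\abs{\tau(z)-\tau(x)}=\abs{\tau(x^{-1}z)}\le C\rho(x,z)$. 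I would pick $\phi\in C^\infty_c(G)$ with $\phi\ge 0$, $\int_G\phi\dif\mu=1$ and support in $\{z\st\rho(z,0)\le 1\}$, set $\sigma(x)=(\lambda^2+\tau(x)^2)^{1/2}$, and define
\[
 u_\lambda(x)=\int_G u(z)\,\sigma(x)^{-Q}\phi\bigl(\delta_{1/\sigma(x)}(x^{-1}z)\bigr)\dif\mu(z)\;.
\]
The substitution $z=x\,\delta_{\sigma(x)}(w)$ shows the weight has integral $1$ for every $x$, so $u_\lambda(x)$ is a weighted average of $u$ over the ball $B_x=\{z\st\rho(x,z)\le\sigma(x)\}$; and $\sigma$ is smooth with $\lambda\le\sigma(x)$, $\rho(x,G_i)\le C\sigma(x)$ and $\abs{\nabla_b\sigma}\le 1$, so $u_\lambda\in C^\infty(G)$.

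The computational core will be the estimate
\[
 \int_{B(x,R)}\abs{\nabla_b u}\dif\mu\le C\,R^{(Q-1)^2/Q}\bigl(R+\rho(x,G_i)\bigr)M(I)(0)\qquad(x\in G,\ R>0)\;.
\]
To get it, I would note that the Lipschitz bound on $\tau$ forces $B(x,R)$ to meet only the cosets $e^{tY_i}G_i$ with $\abs{t-\tau(x)}\le CR$, and that rescaling by $\delta_{1/R}$ together with compactness of the unit ball shows that the cross-section of $B(x,R)$ in each such coset has $\nu$-measure at most $C\,R^{Q-1}$. Then, by H\"older's inequality on each coset and then the bound $\int_{\tau(x)-CR}^{\tau(x)+CR}I(t)\dif t\le C\bigl(\abs{\tau(x)}+R\bigr)M(I)(0)$ furnished by the definition of the maximal function,
\[
 \int_{B(x,R)}\abs{\nabla_b u}\dif\mu\le\int_{\tau(x)-CR}^{\tau(x)+CR}\Bigl(\int_{B(x,R)\cap e^{tY_i}G_i}\abs{\nabla_b u(e^{tY_i}h)}\dif\nu(h)\Bigr)\dif t\le C\,R^{(Q-1)^2/Q}\int_{\tau(x)-CR}^{\tau(x)+CR}I(t)\dif t\;,
\]
and $\abs{\tau(x)}\asymp\rho(x,G_i)$ finishes the core estimate.

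For \eqref{ineqLipschitzApprox} I would differentiate $u_\lambda$ under the integral sign: left-invariance of $\nabla_b$, the chain rule and $\abs{\nabla_b\sigma}\le 1$ bound the $\nabla_b^x$-derivative of the weight by $C\sigma(x)^{-Q-1}$ on $B_x$ and by $0$ outside it, and since the weight integrates to $1$ one may subtract the mean $u_{B_x}=\mu(B_x)^{-1}\int_{B_x}u\dif\mu$ inside the integral. Jerison's $\mathrm{L}^1$ Poincar\'e inequality \cite{Jerison1986} on $B_x$ (a translate and dilate of the unit ball) then gives
\[
 \abs{\nabla_b u_\lambda(x)}\le C\sigma(x)^{-Q-1}\int_{B_x}\abs{u-u_{B_x}}\dif\mu\le C\sigma(x)^{-Q}\int_{B_x}\abs{\nabla_b u}\dif\mu\;,
\]
and the core estimate with $R=\sigma(x)$, $\rho(x,G_i)\le C\sigma(x)$ makes this at most $C\sigma(x)^{-Q+1+(Q-1)^2/Q}M(I)(0)=C\sigma(x)^{1/Q-1}M(I)(0)\le C\lambda^{1/Q-1}M(I)(0)$, using $\sigma\ge\lambda$ and $\tfrac1Q-1\le 0$. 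For \eqref{ineqLinftyApprox} I would fix $x\in G_i$, so $\sigma(x)=\lambda$ and $\rho(x,G_i)=0$, write $B_k=B(x,2^{-k}\lambda)$, and combine Jerison's Poincar\'e inequality with the core estimate ($R=2^{-k}\lambda$) to get $\abs{u_{B_{k+1}}-u_{B_k}}\le C\,2^{-k}\lambda\,\mu(B_k)^{-1}\int_{B_k}\abs{\nabla_b u}\dif\mu\le C\,(2^{-k}\lambda)^{1/Q}M(I)(0)$, where the exponent is $2-Q+(Q-1)^2/Q=\tfrac1Q$; summing the geometric series (using $u_{B_k}\to u(x)$) bounds $\abs{u(x)-u_{B_0}}$ by $C\lambda^{1/Q}M(I)(0)$, and the same computation bounds $\abs{u_{B_0}-u_\lambda(x)}$.

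I expect the main obstacle to be the geometric input to the core estimate---that, uniformly in $x$ and $R$, the ball $B(x,R)$ stays in the slab $\{\abs{\tau-\tau(x)}\le CR\}$ and has cross-sections of $\nu$-measure at most $C\,R^{Q-1}$ on the cosets of $G_i$. This is precisely where the homogeneous structure of $G$ enters---the dilations commuting with the flow of $Y_i$, the gradedness of $\mathfrak g_i$, and the compactness of the unit ball, i.e.\ Jerison's machinery \cite{Jerison1986}---and once it is in hand the powers of $\sigma$ collapse to the exact exponents $1/Q$ and $1/Q-1$ and nothing further is needed.
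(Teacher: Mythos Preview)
Your argument is correct, and it reaches the same conclusion through a genuinely different route from the paper's. Both constructions mollify at the variable scale $\sigma=\sqrt{\lambda^2+t^2}$ and both rest on the same slicing bound you call the ``core estimate'' (the paper derives it in the middle of its proof: $B(g,K\tau)$ meets only cosets with $\lvert t\rvert\le\kappa\tau$, and each cross-section has $\nu$-measure $\le C\tau^{Q-1}$). The difference is in how the two derivatives are obtained. The paper centers the convolution at the \emph{projection} $g\in G_i$ of $x=ge^{tY_i}$, i.e.\ $u_\lambda(ge^{tY_i})=(u\ast I_{\sqrt{\lambda^2+t^2}}\eta)(g)$; it then computes $Y_iu_\lambda$ via $\partial_\tau I_\tau\eta$ and $Y_ju_\lambda$ (for $j\neq i$) via the adjoint representation $\Ad(e^{tY_i})Y_j$, invoking Jerison's Lemma~3.1$'$ to rewrite these as $\sum_k Y_k^R I_\tau D^{(k)}\eta$ and integrating by parts through $(Y_ku)\ast v=-u\ast Y_k^R v$. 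For \eqref{ineqLinftyApprox} it writes $u_\lambda-u=\int_0^\lambda\partial_\tau(u\ast I_\tau\eta)\dif\tau$ and treats each $\tau$ by the same device. You instead center the mollification at $x$ itself, differentiate the kernel directly (your bound $\lvert\nabla_b^x K\rvert\le C\sigma^{-Q-1}$ is correct for the reasons you give), and replace the convolution algebra by Jerison's $\mathrm{L}^1$ Poincar\'e inequality plus a dyadic telescope. Your approach is cleaner for this lemma and avoids the $Y^R/\Ad$ bookkeeping; the paper's explicit convolution formulas, on the other hand, are what allow it later (Lemma~\ref{lemmaApproximationHigher}) to control $\nabla_b^k u_\lambda$ for all $k$, which your Poincar\'e-based argument would not yield without further work.
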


The proof of this Lemma relies on several tools developed by Jerison for the analysis on Lie groups \cite{Jerison1986}. First, let $R$ denote the composition by the inverse: $Ru(g)=u(g^{-1})$. If $Y$ is a vector field, then the vector field $Y^R$ is defined by $Y^Ru= RY\!Ru$, where $Rg=g^{-1}$. If $Y$ is a left-invariant vector field on $G$, then $Y^R$ is a right-invariant vector field on $G$. 
The group convolution on $G$ is defined by
\[
  (u \ast v)(g)=\int_{G} u(gh^{-1})v(h)\dif  \mu(h)=\int_{G} u(h)v(h^{-1}g)\dif 
\mu(h)\;.
\]
From the associative law, if $Y$ is a left-invariant vector field, one has
\[
  Y(u \ast v)=u \ast Yv\;,
\]
and
\begin{equation}
\label{eqYRconv}
  (Yu) \ast v=-u \ast Y^Rv\;.
\end{equation}

One can define dilations on $G$. First define its derivative at the identity $d_\tau \colon \mathfrak{g}\to \mathfrak{g}$ by $d_\tau x=\tau^ix$ on $V_i$, for $1 \le i \le p$. One checks that $d_\tau$ is in an automorphism of $\mathfrak{g}$ as a Lie algebra. Therefore, the dilation $\delta_\tau \colon G \to G$ can be defined as the group automorphism such that the differential of $\delta_\tau$ at the identity is $d_\tau$. Note that $\mu(\delta_\tau A)=\tau^{Q} \mu(A)$.
For $\eta \colon G \to \R$, one further defines
\[
 I_\tau \eta (g)=\frac{1}{\tau^Q} \eta (\delta_{\tau^{-1}} g )\;,
\]
so that, if $\eta \in \mathrm{L}^1(G)$,
\[
  \int_G \eta\dif \mu=\int_G I_\tau \eta\dif \mu\;.
\]
The dilation also allows to define balls. Take the unit ball $B(e,1)$ around the identity $e$ to be the image of an euclidean ball on $\mathfrak{g}$ by the exponential, and define $B(g,\lambda)=g \delta_\lambda B(e,1)$.

The adjoint representation $\Ad : G \to GL(\mathfrak{g})$ is defined as follows: $\Ad(h)$ is the derivative of the automorphism $g \mapsto hgh^{-1}$. 
One has
\[
  [\Ad(h)Y] u (g)=\frac{\partial}{\partial t} u(g he^{tY}h^{-1}) \Bigr\vert_{t=0}
\]
and
\[
  \Ad( \delta_t h)Y=t^{-1}\delta_t  \Ad (h) Y\;.
\]
Since $\mathfrak{g}$ is nilpotent, one also has
\begin{multline*}
  \Ad ({e^{X}}) Y=Y+[X,Y]+\frac{1}{2}[X,[X,Y]]+
\dotsb\\+\frac{1}{(p-1)!}[X,[X,\dotsc [X,[X,Y]]\dotsc]]\;.
\end{multline*}

Finally, we need to transform some derivatives into derivatives with respect to right-invariant vector fields\;.

\begin{lemma}[Jerison \cite{Jerison1986}]
\label{lemmaJerison31p}
There exist differential operators $D^{(k)}$ such that for every $\eta \in C^\infty_c(G)$, 
\[
  \frac{\partial}{\partial \tau} I_\tau \eta=\sum_{k=1}^{m} Y_k^R I_\tau D^{(k)}\eta\;.
\]
For every $Y \in \mathfrak{g}$ there exist differential operators $D^{(k)}_{j}$ such that for every $\eta \in C^\infty_c(G)$, 
\[
  \delta_t Y I_\tau \eta=\sum_{j=1}^p \sum_{k=1}^{m} \bigl(\frac{t}{\tau}\bigr)^jY_k^R I_\tau D^{(k)}_j\eta\;.
\]
\end{lemma}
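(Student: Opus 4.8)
\section{Proof plan for Lemma~\ref{lemmaJerison31p}}

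The plan is to establish both identities by differentiating the explicit formula for $I_\tau$ and re-expressing the resulting derivatives on the right through right-invariant fields, exploiting that the homogeneity degree of $I_\tau \eta$ is fixed.

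For the first identity, I would start from $I_\tau \eta(g) = \tau^{-Q}\eta(\delta_{\tau^{-1}}g)$ and differentiate in $\tau$. Writing $g = \exp(x_1 + \dots + x_p)$ with $x_j \in V_j$, one has $\delta_{\tau^{-1}}g = \exp(\tau^{-1}x_1 + \dots + \tau^{-p}x_p)$, so $\partial_\tau [\delta_{\tau^{-1}}g]$ is, in coordinates, a vector field on $G$ whose components are homogeneous; the factor $-Q\tau^{-Q-1}\eta(\delta_{\tau^{-1}}g)$ from differentiating the prefactor combines with the chain-rule term. The key observation is that $\partial_\tau I_\tau\eta$ is again a function homogeneous of the same type, and that any such quantity built from $\eta$ and its derivatives can be written as $\sum_k Y_k^R(\text{something})$: indeed $Y_k^R$ acts on the left, $Y_k^R I_\tau = I_\tau \circ (\text{operator of degree } -1 \text{ adjusted by } \tau)$ up to the scaling bookkeeping, which is precisely what lets one absorb the $\tau$-derivative. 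Concretely, I would use that $\frac{\partial}{\partial\tau}$ acting on $\delta_{\tau^{-1}}$ generates the \emph{Euler vector field} $\mathcal{E}$ (the infinitesimal generator of dilations), so $\partial_\tau I_\tau\eta = \tau^{-1} I_\tau(\mathcal{E}\eta - Q\eta + Q\eta) = \tau^{-1}I_\tau(\mathcal E \eta)$ after the prefactor cancellation with the intrinsic weight; then, since $V_1$ generates $\mathfrak g$, the Euler field $\mathcal E$ can be written as a finite sum of compositions of left-invariant fields $Y_k$, and passing from left to right via $Rg = g^{-1}$ together with \eqref{eqYRconv} converts each $Y_k$ into $Y_k^R$ acting on the outside of $I_\tau$, producing the operators $D^{(k)}$. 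The cleanest route is actually Jerison's: write $\partial_\tau I_\tau\eta(g) = \frac{d}{ds}\big|_{s=0} I_\tau\eta(\delta_{1+s/\tau \cdot \tau^{-1}}\cdots)$ — but I would instead directly verify that $\partial_\tau I_\tau = \sum_k Y_k^R I_\tau D^{(k)}$ holds by testing on $\eta = $ a spanning set, using uniqueness of the expansion.

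For the second identity, I would compute $\delta_t Y I_\tau\eta$ where $\delta_t$ denotes the pushforward action $\eta \mapsto I_t^{-1}$-type conjugation; the point is that $\delta_t Y$ means the vector field $Y$ conjugated by the dilation $\delta_t$, which on $V_j$ scales like $t^{-j}$, hence $\delta_t Y = \sum_j t^{-j}(\text{component in } V_j)$. Applying this to $I_\tau\eta$ and commuting the dilation through, each $V_j$-component contributes a factor $(t/\tau)^j$ once the $\tau$-scaling of $I_\tau$ is accounted for; then, as in the first part, each left-invariant derivative is flipped to a right-invariant one via $R$ and \eqref{eqYRconv}, yielding the operators $D^{(k)}_j$. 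The bookkeeping of which power of $t/\tau$ attaches to which stratum is the routine but delicate part.

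The main obstacle I anticipate is the scaling bookkeeping: keeping straight the three interacting scalings — the $\tau^{-Q}$ prefactor in $I_\tau$, the stratum-dependent dilation weights $\tau^{-j}$ on $V_j$, and the conjugation $\delta_t Y$ — so that the powers combine exactly into $(t/\tau)^j$ and the $\tau$-derivative comes out as a clean first-order expression with no leftover weight. A secondary technical point is justifying the passage between left-invariant and right-invariant derivatives: this uses $Ru(g) = u(g^{-1})$, the relation $Y^R u = RYRu$, and \eqref{eqYRconv}, and one must check that the composition of such flips with $I_\tau$ (which does not commute with $R$ in an obvious way because $\delta_{\tau^{-1}}(g^{-1}) = (\delta_{\tau^{-1}}g)^{-1}$ does hold, fortunately) produces genuinely \emph{differential} operators $D^{(k)}$, $D^{(k)}_j$ independent of $\tau$ and $t$ — which follows because all $\tau,t$-dependence has been extracted into the scalar prefactors. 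Since this lemma is quoted from Jerison, I would cite \cite{Jerison1986} for the detailed verification and only indicate the mechanism here.
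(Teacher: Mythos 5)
Your proposal misidentifies where the actual content of Lemma~\ref{lemmaJerison31p} lies, and the mechanism you describe for producing the $Y_k^R$'s would not work as stated. The whole point of the lemma is structural: every left-invariant field $Y^j\in V_j$, of \emph{any} stratum, can be written as $Y^j\eta=\sum_{k=1}^m Y_k^R D_j^{(k)}\eta$ with a \emph{single first-stratum right-invariant} field standing outside and a fixed ($\tau$- and $t$-independent) differential operator inside; this is part (a) of Jerison's Lemma~3.1$'$, and part (b) is the analogous statement for $\partial_\tau I_\tau$. Your route --- write the Euler field (resp.\ the higher-stratum components of $Y$) through left-invariant fields and then ``pass from left to right via $Rg=g^{-1}$ together with \eqref{eqYRconv}'' --- does not deliver this: conjugation by $R$ turns a stratum-$j$ left-invariant field into a stratum-$j$ right-invariant field, not into a combination of $Y_1^R,\dots,Y_m^R$, and \eqref{eqYRconv} is a convolution identity that enters only downstream, when the lemma is applied to $u\ast I_\tau\eta$, not in the proof of the lemma itself. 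If instead you expand a higher-stratum field as iterated commutators of first-stratum fields (using that $V_1$ generates $\mathfrak g$), you get compositions of several $Y_k^R$'s on the outside, which is fatal for the application: after the integration by parts in the convolution only $\nabla_b u$ may appear, so exactly one $Y_k^R$ can stand outside $I_\tau$. The paper does not reprove this fact; it quotes Jerison's Lemma~3.1$'$(b) verbatim for the first identity and deduces the second from Lemma~3.1$'$(a) together with the two elementary scalings $I_\tau(Y^j\eta)=\tau^j\,Y^jI_\tau\eta$ and $I_\tau(Y_k^R\zeta)=\tau\,Y_k^RI_\tau\zeta$. Your closing ``cite Jerison for the detailed verification'' would be fine, but as written you defer exactly the essential step while presenting as ``the mechanism'' a conversion that does not produce the required form.

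There are also two quantitative slips worth fixing. First, $\partial_\tau I_\tau\eta=-\tau^{-1}I_\tau\bigl((Q+\mathcal{E})\eta\bigr)$, not $\tau^{-1}I_\tau(\mathcal{E}\eta)$: the $-Q$ term does not cancel; it is the combination $(Q+\mathcal{E})\eta$, which has vanishing integral, that admits the divergence-type representation of Jerison's part (b). Second, in the second identity $\delta_tY$ denotes the Lie-algebra dilation applied to $Y$, which multiplies the $V_j$-component by $t^{j}$, not $t^{-j}$; combined with $Y^jI_\tau=\tau^{-j}I_\tau Y^j$ (and one factor $\tau$ from pulling $Y_k^R$ out of $I_\tau$) this is what yields the factors $(t/\tau)^j$ --- with your $t^{-j}$ the powers cannot combine as in the statement.
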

\begin{proof}
The first statement is exactly (b) of Lemma 3.1$'$ in Jerison's paper \cite{Jerison1986}. For the second statement, let $Y =\sum_{i=1}^p Y^j$ with $Y^j \in V_j$. Part (a) in the same Lemma 3.1$'$ \cite{Jerison1986} states that 
\[
  Y^j \eta=\sum_{k=1}^m Y_k^R D^{(k)}_j \eta\;,
\]
for some differential operators $D^{(k)}_j$. Since 
\begin{align*}
  I_\tau(Y^j \eta)&=\tau^j Y^j I_\tau \eta & \text{ and }& &I_\tau Y_k^R D^{(k)}_i \eta&=\tau  Y_k^RI_\tau D^{(k)}_i \eta\;,
\end{align*}
our statement follows immediately.
\end{proof}

\begin{proof}[Proof of Lemma~\ref{lemmaApproximation}]
Choose $\eta \in C^\infty_c(G)$ such that $\int_G \eta \dif \mu=1$. For every $g \in G_i$ and $t \in \R$, define
\[
  u_\lambda(ge^{tY_i})=(u \ast I_{\sqrt{\lambda^2+t^2}} \eta)(g)
\]
Let us first check \eqref{ineqLinftyApprox}. We need to estimate $\abs{u_\lambda(g)-u(g)}$ for $g \in G_i$. One has clearly
\[
  u_\lambda(g)-u(g)=\int_0^{\lambda} \frac{\partial }{\partial \tau}\Bigl[ u \ast I_\tau \eta\Bigr](g) \dif \tau
                =\int_0^{\lambda} \Bigl[u \ast \frac{\partial }{\partial \tau} I_\tau \eta\Bigr] (g)\dif \tau\;.
\]
Therefore,
\[
\begin{split}
  u_\lambda(g)-u(g)&=\sum_{k=1}^{m} \int_0^{\lambda} \Bigl[u \ast (Y_k^R I_\tau \eta^{(k)})\Bigr](g)\dif \tau \\
  &=-\sum_{k=1}^{m} \int_0^{\lambda} \Bigl[(Y_k u) \ast (I_\tau \eta^{(k)})\Bigr](g)\dif \tau\;,
\end{split}
\]
where $\eta^{(k)}=D^{(k)} \eta$ was provided by Lemma~\ref{lemmaJerison31p}, and \eqref{eqYRconv} justified the integration by parts.
Therefore, for some $C,K < \infty$,
\[
\begin{split}
  \abs{u_\lambda(g)-u(g)} &= \biggl\lvert\sum_{k=1}^{m}\int_0^\lambda \int_{G} Y_k u(h)I_\tau \eta^{(k)}(h^{-1}g)\dif \mu(h) \dif \tau \biggr\rvert\\
&\le C \int_0^{\lambda} \frac{1}{\tau^Q} \biggl(\int_{B(g,K\tau)} \abs{\nabla_b u(h)}\dif h\biggr)\dif \tau\;.
\end{split}
\]
Now note that $B(g,K\tau) \cap e^{tY_i}G_i = \emptyset$ 
when 
$\abs{t} 
\ge 
\kappa \tau$, 
for some $\kappa < \infty$; therefore
\[
  \abs{u_\lambda(g)-u(g)} 
\le C \int_0^{\lambda} \frac{1}{\tau^Q} \int_{]-\kappa \tau,\kappa \tau[} \int_{G_i \cap e^{-tY} B(g,K\tau)} \abs{\nabla_b u(e^{tY_i}h)}\dif \nu (h) \dif t\dif \tau\;.
\]
Since $\nu(e^{-tY} B(g,K\tau) \cap G_i)\le C s^{Q-1}$, we obtain, by H\"older's inequality,
\[
\begin{split}
  \abs{u_\lambda(g)-u(g)} 
&\le C'\int_0^{\lambda} \tau^{\frac{1}{Q}-1}\frac{1}{2\kappa \tau} \int_{]-\kappa \tau,\kappa \tau[} \Bigl(\int_{G_i} \abs{\nabla_b u(e^{tY_i}h)}^Q\dif \nu (h)\Bigr)^\frac{1}{Q} \dif t\dif \tau\\
&=QC' \lambda^\frac{1}{Q} M(I)(0)\;.
\end{split}
\]

Now we prove \eqref{ineqLipschitzApprox}. First one notes that for $g \in G_i$, $t \in \R$,
\begin{equation}
\label{eqYiulambda}
  Y_i u_\lambda (ge^{tY_i})=\tfrac{t}{\sqrt{\lambda^2+t^2}} (u \ast \tfrac{\partial}{\partial t} I_{\sqrt{\lambda^2+t^2}} \eta) (g)\;,
\end{equation}
which can be estimated as above:
\[
  \abs{Y_i u_\lambda (ge^{tY_i})} \le C \frac{t}{(\lambda^2+t^2)^{\frac{1}{2}-\frac{1}{2Q}}} M(I)(0) \le C \lambda^{\frac{1}{Q}-1} M(I)(0)
\]

Now, assume $j \ne i$. Since $G_i$ is normal, $e^{tY_i} e^{sY_j}e^{-tY_i} \in G_i$ for every $s \in \R$, whence $Y_j u_\lambda (ge^{tY_i}e^{sY_j})=(u \ast  I_{\sqrt{\lambda^2+t^2}} \eta )(ge^{tY_i} e^{sY_j}e^{-tY_i})$ and
\[
\begin{split}
  Y_j u_\lambda (ge^{tY_i})
             &=  (\Ad ({e^{tY_i}}) Y_j)(u \ast  I_{\sqrt{\lambda^2+t^2}} \eta )(g)\\
             &=  \bigl(u \ast (\Ad ({e^{tY_i}}) Y_j) I_{\sqrt{\lambda^2+t^2}} \eta \bigr)(g)\\
             &= \bigl(u  \ast (\tfrac{1}{t}\delta_t \Ad ({e^{Y_i}}) Y_j) I_{\sqrt{\lambda^2+t^2}} \eta \bigr)(g)\;.
\end{split}
\]
By Lemma~\ref{lemmaJerison31p}, this can be rewritten as
\begin{equation}
\label{eqYjulambda}
\begin{split}
Y_j u_\lambda(ge^{tY_i})&=\sum_{j=1}^p \sum_{k=1}^{m} u \ast  \frac{t^{j-1}}{(\lambda^2+t^2)^\frac{j}{2}}Y_k^R I_{\sqrt{\lambda^2+t^2}} D^{(k)}_j\eta\;\\
&=-\sum_{j=1}^p \sum_{k=1}^{m}  \frac{t^{j-1}}{(\lambda^2+t^2)^\frac{j}{2}}Y_ku \ast  I_{\sqrt{\lambda^2+t^2}} D^{(k)}_j\eta\;.
\end{split}
\end{equation}
where $\eta^{k}_j=D^{(k)}_j \eta$ is given by Lemma~\ref{lemmaJerison31p}.
Estimating each term as previously, one obtains
\[
  \abs{Y_j u_\lambda (g e^{t Y_i})}  \le C  \sum_{j=1}^p\frac{t^{j-1}}{(\lambda^2+t^2)^{\frac{j}{2}-\frac{1}{2Q}}} M(I)(0) \le C' \lambda^{\frac{1}{Q}-1}M(I)(0) \;.\qedhere
\]
\end{proof}

\section{Proof of the estimate}
\label{sectionMainProof}

Lemma~\ref{lemmaApproximation} brings us in position to prove Theorem~\ref{mainTheorem}:

\begin{proof}[Proof of Theorem~\ref{mainTheorem}]
Decomposing $\varphi$ and $F$ as $\varphi^i=\dualprod{\varphi}{Y_i}$ and $F=\sum_{i=1}^{m}F_i Y_i$, one has
\[
  \int_{G} \dualprod{\varphi}{F}\dif \mu=\sum_{i=1}^{m} \int_{G} \varphi^i F_i\dif \mu\;.
\]
Fixing now $1 \le i \le m$, one has, 
\[
  \int_{G} \varphi^i F_i\dif \mu=\int_{\R} \int_{G_i} F_i(e^{tY_i}h)\varphi^i(e^{tY_i}h)\dif \nu(h)\dif t\;.
\]
Let us estimate the inner integral. For simplicity, first assume that $t =0$. For every $\lambda > 0$, one has
\[
  \int_{G_i} F_i\varphi^i\dif \nu=\int_{G_i} F_i(\varphi^i-\varphi^i_\lambda)\dif \nu+\int_{G_i} F_i\varphi^i_\lambda\dif \nu\;,
\]
where $\varphi^i_\lambda$ is given by Lemma~\ref{lemmaApproximation}.
On the one hand, one has
\begin{equation}
\label{ineqLinftyEstimate}
\begin{split}
 \int_{G_i} F_i\,(\varphi^i-\varphi^i_\lambda)\dif \nu &\le \Norm{F_i}_{\mathrm{L}^1(G_i)}\Norm{\varphi^i-\varphi^i_\lambda}_{\mathrm{L}^\infty(G_i)} \\
  &\le C\lambda^{\frac{1}{Q}} \Norm{F_i}_{\mathrm{L}^1(G_i)} M(I)(0)\;.
\end{split}
\end{equation}
On the other hand,
\[
\begin{split}
 \int_{G_i} F_i\varphi^i_\lambda\dif \nu&=\int_{G_i} \int_{-\infty}^0 \frac{\partial}{\partial s} \bigl[F_i(he^{sY_i} )\varphi^i_\lambda(he^{sY_i})\bigr]\dif s\dif \nu(h) \\
&=\int_{G_i} \int_{-\infty}^0 Y_i \bigl[F_i(he^{sY_i})\varphi^i_\lambda(he^{sY_i})\bigr]\dif s\dif \nu(h) \\
&=\int_{-\infty}^0\int_{G_i}  \bigl[F_i Y_i \varphi^i_\lambda+\varphi^i_\lambda Y_i F_i\bigr](he^{sY_i})\dif \nu(h)\dif s\;. \\
\end{split}
\]
Since $Y_i F_i=-\sum_{j \ne i} Y_j F_j$, this becomes
\[
  \int_{G_i} F_i(h)\varphi^i_\lambda(h)\dif \nu(h)=\int_{-\infty}^0 \int_{G_i}  \bigl[F_i Y_i \varphi^i_\lambda -\sum_{j \ne i} \varphi^i_\lambda Y_i F_i\bigr](he^{sY_i})\dif \nu(h)\dif s\;.
\]
Since $Y_j \in \mathfrak{g}_i$ when $j \ne i$, and since $\nu$ is right-invariant on $h$, integration by parts on $G_i$ yields
\[
 \int_{G_i} F_i(h)\varphi^i_\lambda(h)\dif \nu(h)=\sum_{j=1}^{m} \int_{-\infty}^0 \int_{G_i} \bigl[F_j Y_j \varphi^i_\lambda\Bigr](he^{sY_i})\dif \nu (h)\dif s\;.
\]
We have thus the bound
\begin{equation}
\label{ineqLipchitzEtimate}
\begin{split}
 \Bigl\lvert\int_{G_i} F_i(h)\varphi^i_\lambda(h)\dif \nu(h)\Bigl\lvert &\le \Norm{F}_{\mathrm{L}^1(G)} \Norm{\nabla_b \varphi^i_\lambda}_{\mathrm{L}^\infty(G)} \\
& \le C \lambda^{\frac{1}{Q}-1} \Norm{F}_{\mathrm{L}^1(G)} M(I)(0)\;.
\end{split}
\end{equation}
Choosing now 
\begin{equation}
\label{equationlambda}
  \lambda=\frac{\Norm{F}_{\mathrm{L}^1(G)}}{\Norm{F_i}_{\mathrm{L}^1(G_i)}}\;,
\end{equation}
one obtains by \eqref{ineqLinftyEstimate} and \eqref{ineqLipchitzEtimate}
\[
   \Bigl\lvert\int_{G_i} F_i\varphi^i\dif \nu\Bigr\rvert \le C \Norm{F}_{\mathrm{L}^1(G)}^\frac{1}{Q} \Norm{F_i}_{\mathrm{L}^1(G_i)}^{1-\frac{1}{Q}} M(I)(0)\;.
\]
By translation of this inequality we obtain, for every $t \in \R$,
\[
 \Bigl\lvert\int_{G_i} F_i(e^{tY_i} h)\varphi^i(e^{tY_i} h)\dif \nu(h)\Bigr\rvert \le C \Norm{F}_{\mathrm{L}^1(G)}^\frac{1}{Q} \Norm{F_i}_{\mathrm{L}^1(e^{tY_i} G_i)}^{1-\frac{1}{Q}} M(I)(t)\;.
\]
Integrating this inequality on $\R$, one obtains by H\"older's inequality
\[
\begin{split}
 \Bigl\lvert\int_G F_i \varphi^i \dif \mu\Bigl\lvert&\le \int_{-\infty}^\infty \Bigl\lvert\int_{G_i} F_i(e^{tY_i} h)\varphi^i(e^{tY_i} h)\dif \nu(h)\Bigr\rvert\dif t \\
                          &\le C \Norm{F}_{\mathrm{L}^1(G)}^\frac{1}{Q} \Bigl( \int_{-\infty}^\infty \Norm{F_i}_{\mathrm{L}^1(e^{tY_i} G_i)} \dif t \Bigr)^{1-\frac{1}{Q}}\Bigl(\int_{-\infty}^\infty \bigl[M(I)(t)\bigr]^Q \dif t\Bigr)^\frac{1}{Q} \\
                          &\le C' \Norm{F}_{\mathrm{L}^1(G)} \Norm{\nabla_b \varphi^i}_{\mathrm{L}^Q(G)}\;,
\end{split}
\]
since by the maximal function theorem (see e.g.\ \cite{Stein1993}), there exists $C'' <\infty$ such that 
\[
  \Norm{M(I)}_{\mathrm{L}^Q(\R)} \le C'' \Norm{I}_{\mathrm{L}^Q(\R)}\;.\qedhere
\]
\end{proof}

\section{Elliptic regularity}
\label{sectionRegularity}

Theorem~\ref{theoremRegularity} follows from Theorem~\ref{mainTheorem} and the theory of regularity on homogeneous groups.

\begin{proof}[Proof of Theorem~\ref{theoremRegularity}]
By Theorem~\ref{mainTheorem}, one can write $F_i=\sum_{k=1}^{m} Y_kh_{ki}$, with 
\[
  \Norm{h_{ki}}_{\mathrm{L}^{Q/(Q-1)}(G)} \le C \Norm{F}_{\mathrm{L}^1(G)}\;. 
\]
Therefore, 
\[
  Y_j U_i=Y_j \mathcal{G} \ast \sum_{k=1}^{m} Y_kh_{kj}=\sum_{k=1}^{m} Y_jY_k(\mathcal{G} \ast h_{ij})
\]
where $\mathcal{G}$ is the fundamental solution of $-\Delta_b$. By the analogue of the Calderon--Zygmund inequality for homogeneous groups \cite{Folland1975,RotschildStein1976,FollandStein1974},
\[
\Norm{Y_jY_k(\mathcal{G} \ast h_{ij})}_{\mathrm{L}^{Q/(Q-1)}(G)} \le C\sum_{k=1}^{m}\Norm{h_{kj}}_{\mathrm{L}^{Q/(Q-1)}(G)}
\le C' \Norm{F}_{\mathrm{L}^1(G)}\;.
\]
This concludes the proof.
\end{proof}

\section{Further inequalities}
\label{sectionFurther}

\subsection{$\mathrm{L}^1$--divergence}
Theorem~\ref{mainTheorem} can be extended to the case where the divergence of $F$ is in $\mathrm{L}^1$:

\begin{theorem}
\label{theoremL1Div}
If $\varphi \in C^\infty_c(G,T^*G)$ is a section of the cotangent bundle and the vector field $F \in \mathrm{L}^1(G;T_bG)$ and $\Div_b F=f \in \mathrm{L}^1(G)$ in the weak sense, i.e. 
\[
  \int_G F\psi\dif \nu=-\int_G f\psi\dif \nu
\]
then
\[
   \Bigl\lvert\int_{G} \dualprod{\varphi}{F} \dif \mu\Bigr\rvert \le C (\Norm{F}_{\mathrm{L}^1(G)}\Norm{\nabla_b \varphi}_{\mathrm{L}^Q(G)}+\Norm{\Div_b F}_{\mathrm{L}^1(G)}\Norm{\varphi}_{\mathrm{L}^Q(G)})\;.
\]
\end{theorem}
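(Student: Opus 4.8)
The plan is to adapt the proof of Theorem~\ref{mainTheorem}, carrying along the extra divergence term. As before, decompose $\varphi^i=\dualprod{\varphi}{Y_i}$ and $F=\sum_i F_iY_i$, slice $G$ into cosets of the normal subgroups $G_i$, and, for a fixed slice which we may take to be $G_i$ itself by translation, split $\varphi^i = (\varphi^i-\varphi^i_\lambda)+\varphi^i_\lambda$ using the approximation from Lemma~\ref{lemmaApproximation}. The $\mathrm{L}^\infty$ part is handled exactly as in \eqref{ineqLinftyEstimate}. The only place where the divergence-free hypothesis was used was in the integration by parts on $G_i$: there one wrote $Y_iF_i=-\sum_{j\ne i}Y_jF_j$. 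Now instead $Y_iF_i = f - \sum_{j\ne i}Y_jF_j$ in the weak sense, so that extra term $f$ survives.

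Concretely, I would redo the computation of $\int_{G_i}F_i\varphi^i_\lambda\dif\nu$ by writing it as $\int_{-\infty}^0\int_{G_i}Y_i[F_i\varphi^i_\lambda](he^{sY_i})\dif\nu(h)\dif s = \int_{-\infty}^0\int_{G_i}[F_iY_i\varphi^i_\lambda + \varphi^i_\lambda Y_iF_i](he^{sY_i})\dif\nu\dif s$, then substitute $Y_iF_i = f - \sum_{j\ne i}Y_jF_j$ and integrate by parts in the $G_i$ directions (valid since $Y_j\in\mathfrak g_i$ for $j\ne i$). This yields $\int_{G_i}F_i\varphi^i_\lambda\dif\nu = \sum_{j=1}^{m}\int_{-\infty}^0\int_{G_i}[F_jY_j\varphi^i_\lambda](he^{sY_i})\dif\nu\dif s + \int_{-\infty}^0\int_{G_i}[f\,\varphi^i_\lambda](he^{sY_i})\dif\nu\dif s$. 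The first sum is bounded by $C\lambda^{1/Q-1}\Norm{F}_{\mathrm L^1(G)}M(I)(0)$ as in \eqref{ineqLipchitzEtimate}; the new term is bounded by $\Norm{f}_{\mathrm L^1}\Norm{\varphi^i_\lambda}_{\mathrm L^\infty(G)}$. To control $\Norm{\varphi^i_\lambda}_{\mathrm L^\infty}$ I would note $\Norm{\varphi^i_\lambda}_{\mathrm L^\infty(G_i)}\le \Norm{\varphi^i}_{\mathrm L^\infty(G_i)}+C\lambda^{1/Q}M(I)(0)$, and more generally on a coset $e^{sY_i}G_i$ one has $\abs{\varphi^i_\lambda(he^{sY_i})}\le \sup_{\abs{t-s}\le\kappa\lambda}\Norm{\varphi^i}_{\mathrm L^\infty(e^{tY_i}G_i)} + C\lambda^{1/Q}M(I)(s)$ by the construction of $u_\lambda$ as a mollification at scale $\sqrt{\lambda^2+t^2}\le\kappa'\lambda$; introduce the slice-wise sup $J(s)=\Norm{\varphi^i}_{\mathrm L^\infty(e^{sY_i}G_i)}$ and its maximal function.

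Then the estimate on a single slice becomes, after optimizing $\lambda$ (now the choice \eqref{equationlambda} must be revisited, or one simply keeps $\lambda$ free and uses Young's inequality on the three terms $\lambda^{1/Q}A$, $\lambda^{1/Q-1}B$, and $C+\lambda^{1/Q}D$ where $C=\Norm{f}_{\mathrm L^1}J$-type term), a bound of the form $C(\Norm F_{\mathrm L^1}^{1/Q}\Norm{F_i}_{\mathrm L^1(e^{sY_i}G_i)}^{1-1/Q}M(I)(s) + \Norm{\Div_bF}_{\mathrm L^1}(M(J)(s)+\text{lower order}))$. Integrating over $s\in\R$ and applying Hölder exactly as in the proof of Theorem~\ref{mainTheorem} for the first term, and for the second term using $\int_\R M(J)(s)^Q\dif s\le C\int_\R J(s)^Q\dif s\le C\Norm{\varphi^i}_{\mathrm L^Q(G)}^Q$ together with $\Norm{\Div_bF}_{\mathrm L^1}\in\mathrm L^1_s$ being just a constant — wait, here one must be careful that the $f$-contribution on the slice $e^{sY_i}G_i$ is $\Norm f_{\mathrm L^1(e^{sY_i}G_i)}$, which integrates in $s$ to $\Norm f_{\mathrm L^1(G)}$, and the companion factor $J$ must be handled in $\mathrm L^Q$ — gives the claimed inequality after summing over $i$.

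The main obstacle I expect is controlling the new boundary-type term $\int\int f\,\varphi^i_\lambda$: one needs a pointwise-in-$\varphi$ bound on $\varphi^i_\lambda$ rather than only on $\varphi^i_\lambda-\varphi^i$, and $\Norm{\varphi^i}_{\mathrm L^\infty}$ on slices is genuinely finite since $\varphi$ is compactly supported and smooth, but to get the right scaling in $\mathrm L^Q$ one must trade it, via the maximal function of the slice-wise sup $J$, against $\Norm{\nabla_b\varphi}_{\mathrm L^Q}$ — or, more cheaply, simply estimate $\Norm{\varphi^i_\lambda}_{\mathrm L^\infty(G)}$ by $\Norm{\varphi^i}_{\mathrm L^\infty(G)}+C\lambda^{1/Q}\Norm I_{\mathrm L^\infty}$ and accept a bound with $\Norm\varphi_{\mathrm L^Q}$ replaced by a Sobolev-type norm, then recover the stated form by the subelliptic Sobolev embedding $\mathrm{\dot S}^{1,Q}\hookrightarrow$ (appropriate space) and a density/scaling argument. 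Keeping everything sharp enough to land exactly on $\Norm F_{\mathrm L^1}\Norm{\nabla_b\varphi}_{\mathrm L^Q}+\Norm{\Div_bF}_{\mathrm L^1}\Norm\varphi_{\mathrm L^Q}$ is the delicate bookkeeping step, but it is parallel to what was already done for Theorem~\ref{mainTheorem}.
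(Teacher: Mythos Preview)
Your overall strategy matches the paper's, but there is a genuine gap at exactly the point you flag as ``the main obstacle''. You try to control the new term $\int f\,\varphi^i_\lambda$ via the slice-wise supremum $J_\infty(s)=\Norm{\varphi^i}_{\mathrm{L}^\infty(e^{sY_i}G_i)}$, obtaining a bound of the form $\Norm{f}_{\mathrm{L}^1}\bigl(M(J_\infty)(0)+C\lambda^{1/Q}M(I)(0)\bigr)$. This fails on two counts. First, the leading piece $\Norm{f}_{\mathrm{L}^1}M(J_\infty)(0)$ carries no decay in $\lambda$, so it cannot be balanced against the other terms by the choice \eqref{equationlambda}; after integrating in $t$ you are left with $\Norm{f}_{\mathrm{L}^1}\int_{\R} M(J_\infty)(t)\dif t$, and there is no way to control $\int_{\R} J_\infty(t)^Q\dif t$ by $\Norm{\varphi^i}_{\mathrm{L}^Q(G)}^Q$. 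Second, your proposed fallback via a Sobolev embedding $\mathrm{\dot S}^{1,Q}\hookrightarrow(\text{something})$ is exactly the embedding into $\mathrm{L}^\infty$ that fails in the critical case---indeed, circumventing this failure is the whole point of the Bourgain--Brezis theory.

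The missing ingredient is a sharper refinement of Lemma~\ref{lemmaApproximation}: one needs
\[
  \Norm{u_\lambda}_{\mathrm{L}^\infty(G)}\le C\,\lambda^{\frac{1}{Q}-1}\,M(J)(0),
  \qquad
  J(t)=\Bigl(\int_{G_i}\abs{u(e^{tY_i}h)}^Q\dif\nu(h)\Bigr)^{1/Q},
\]
i.e.\ with $J$ the \emph{$\mathrm{L}^Q$} slice norm of $u$, not the $\mathrm{L}^\infty$ slice norm. This follows directly from the definition $u_\lambda(ge^{tY_i})=(u\ast I_{\sqrt{\lambda^2+t^2}}\eta)(g)$ by the same slicing-and-H\"older argument used for \eqref{ineqLinftyApprox}, applied now to $u$ itself rather than to $\nabla_b u$. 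With this bound in hand the extra term acquires the same $\lambda^{1/Q-1}$ decay as the Lipschitz term, the choice \eqref{equationlambda} goes through unchanged, and $\int_{\R} M(J)^Q\le C\int_{\R} J^Q=C\Norm{\varphi^i}_{\mathrm{L}^Q(G)}^Q$ yields exactly the stated $\Norm{\varphi}_{\mathrm{L}^Q}$ factor.
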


This version of the inequality is more stable. It can thus be localized by multiplication by cutoff functions. In particular, that under the assumptions of Theorem~\ref{mainTheorem}\;, if $G$ is a multiply connected Lie group, one has the inequality
\[
  \int_{G} \dualprod{\varphi}{F} \le C \Norm{F}_{\mathrm{L}^1(G)}(\Norm{\varphi}_{\mathrm{L}^Q(G)} +\Norm{\nabla_b \varphi}_{\mathrm{L}^Q(G)})\;.
\]

\begin{proof}[Sketch of the proof of Theorem~\ref{theoremL1Div}]
The proof follows the strategy of the proof of Theorem~\ref{mainTheorem} and requires the following refinement in  Lemma~\ref{lemmaApproximation}:
\[
  \Norm{u_\lambda}_{\mathrm{L}^\infty(G)} \le C \lambda^{\frac{1}{Q}-1} M(J)(0)\;,
\]
where
\[
  J(t)=\Bigl(\int_{G_i} \abs{u(e^{tY_i}h)}^Q \dif \nu(h)\Bigr)^\frac{1}{Q}\;.
\]
One obtains in place of \eqref{ineqLipchitzEtimate}
\begin{multline*}
 \Bigl\lvert\int_{G_i} F_i(h)\varphi^i_\lambda(h)\dif \nu(h)\Bigl\lvert  \\
 \le C \lambda^{\frac{1}{Q}-1}\bigl(\Norm{F}_{\mathrm{L}^1(G)} M(I)(0)+ \Norm{\Div_b F}_{\mathrm{L}^1(G)} M(J)(0)\bigr)\;.
\end{multline*}
Choosing again $\lambda$ given by \eqref{equationlambda}, one has
\begin{multline*}
   \Bigl\lvert\int_{G_i} F_i(h)\varphi^i(h)\dif \nu(h)\Bigr\rvert  \\
\le C \Norm{F_i}_{\mathrm{L}^1(G_i)}^{1-\frac{1}{Q}} \Bigl(\Norm{F}_{\mathrm{L}^1(G)}^\frac{1}{Q}  M(I)(0) + \frac{\Norm{\Div_b F}_{\mathrm{L}^1(G)}}{\Norm{F}_{\mathrm{L}^1(G)}^{(Q-1)/Q}} M(J)(0)  \Bigr)\;.
\end{multline*}
One concludes then as in the proof of Theorem~\ref{mainTheorem}.
\end{proof}

\subsection{Fractional spaces}

In Theorem \ref{mainTheorem}, we can also replace  $\Norm{\nabla_b \varphi}_{\mathrm{L}^Q(G)}$ by a fractional Sobolev--Slobodetski\u\i{} norm. In order to define the latter, the group $G$ is endowed by a norm function $\rho \colon G \to \R^+$ such that 
\begin{align*}
  \rho( \delta_t g)&=t \rho(g)\;,\\
  \rho(gh) &\le c (\rho (g)+\rho(h))\;,\\
  \rho(g^{-1}) &\le c \rho(g)\;,
\end{align*}
for some constant $c>0$ (see e.g.\ \cite[Chapter XIII, 5.1.3]{Stein1993}). One can choose for example
\[
 \rho(g)=\inf \{ \lambda > 0 \st g \in B(e,\lambda) \}\;.
\]

\begin{definition}
Let $u \in \mathrm{L}^1_{\mathrm{loc}}(G)$ and $0 < \alpha < 1$. We say that $u \in \mathrm{\dot{S}}^{\alpha,q}(G)$ if
\[
  \Norm{u}_{\mathrm{\dot{S}}^{\alpha,q}}^q=\int_{G} \int_{G} \frac{\abs{u(h)-u(g)}^q}{\rho(g^{-1}h)^{Q+\alpha q}}\dif \mu(g)\dif \mu(h)<+\infty\;.
\]
\end{definition}

The generalization of Theorem \ref{mainTheorem} to fractional spaces is 

\begin{theorem}
\label{theoremFract}
Let $\alpha \in ]0,1[$ and $p\ge 1$ be such that $\alpha q=Q$. 
There exists $C_{\alpha,q}>0$ such that if $\varphi \in C^\infty_c(G,T^*G)$ is a section of the cotangent bundle and the vector field $F \in \mathrm{L}^1(G;T_bG)$ is divergence-free, then
\[
  \Bigl\lvert\int_{G} \dualprod{\varphi}{F}  \dif \mu\Bigr\rvert \le C_{\alpha,q} \Norm{F}_{\mathrm{L}^1(G)}\Norm{\varphi}_{\mathrm{\dot{S}}^{\alpha,q}(G)}\;.
\]
\end{theorem}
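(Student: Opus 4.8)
The plan is to follow the same slicing strategy used for Theorem~\ref{mainTheorem}, replacing the gradient-based splitting of $\varphi$ on a coset $G_i$ by a splitting built from fractional information. Fix $1\le i\le m$ and write $\int_G \varphi^i F_i\dif\mu=\int_\R\int_{G_i}F_i(e^{tY_i}h)\varphi^i(e^{tY_i}h)\dif\nu(h)\dif t$ as before. The key point is to produce, for each $\lambda>0$, an approximation $\varphi^i_\lambda\in C^\infty(G)$ with
\[
  \Norm{\varphi^i-\varphi^i_\lambda}_{\mathrm{L}^\infty(G_i)}\le C\lambda^{\alpha-\frac{Q}{q}+\frac{1}{q}}\,\mathcal{N}(0)=C\lambda^{\frac{1}{q}}\mathcal{N}(0),
  \qquad
  \Norm{\nabla_b\varphi^i_\lambda}_{\mathrm{L}^\infty(G)}\le C\lambda^{\frac{1}{q}-1}\mathcal{N}(0),
\]
where, using $\alpha q=Q$, the exponents collapse exactly as in Lemma~\ref{lemmaApproximation}, and $\mathcal{N}$ is an appropriate function on $\R$ (a fractional analogue of $I$ composed with a maximal operator) whose $\mathrm{L}^q(\R)$ norm is controlled by $\Norm{\varphi^i}_{\mathrm{\dot S}^{\alpha,q}(G)}$. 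Given such a lemma, the rest of the argument is verbatim: split $\int_{G_i}F_i\varphi^i\dif\nu$ into an $\mathrm{L}^\infty$ term bounded via $\Norm{F_i}_{\mathrm{L}^1(G_i)}$ and a term where one integrates $Y_i[F_i\varphi^i_\lambda]$ along the $e^{sY_i}$-flow, uses $Y_iF_i=-\sum_{j\ne i}Y_jF_j$ and integrates by parts on $G_i$ to land on $\sum_j F_jY_j\varphi^i_\lambda$, bounded by $\Norm{F}_{\mathrm{L}^1(G)}\Norm{\nabla_b\varphi^i_\lambda}_{\mathrm{L}^\infty}$; optimizing with $\lambda=\Norm{F}_{\mathrm{L}^1(G)}/\Norm{F_i}_{\mathrm{L}^1(G_i)}$ and integrating over $t\in\R$ with H\"older and the Hardy--Littlewood maximal theorem on $\R$ gives the claim.

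The construction of $\varphi^i_\lambda$ would again use the dilated mollifier $\varphi^i_\lambda(ge^{tY_i})=(\varphi^i\ast I_{\sqrt{\lambda^2+t^2}}\eta)(g)$, and the $\mathrm{L}^\infty$ bounds would be derived as in the proof of Lemma~\ref{lemmaApproximation} via Lemma~\ref{lemmaJerison31p}, except that instead of integrating by parts in $\tau$ to pull out a $Y_k u$ and then using H\"older in $q$ on $\nabla_b u$, one estimates the oscillation directly. Concretely, $\varphi^i_\lambda(g)-\varphi^i(g)=\int_G(\varphi^i(h)-\varphi^i(g))I_\lambda\eta(h^{-1}g)\dif\mu(h)$ (using $\int\eta=1$), and since $I_\lambda\eta$ is supported in a ball of radius $\sim\lambda$ around $e$ with $\Norm{I_\lambda\eta}_\infty\sim\lambda^{-Q}$, one gets $\abs{\varphi^i_\lambda(g)-\varphi^i(g)}\le C\lambda^{-Q}\int_{B(g,K\lambda)}\abs{\varphi^i(h)-\varphi^i(g)}\dif\mu(h)$. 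Slicing $B(g,K\lambda)$ by cosets $e^{tY_i}G_i$ with $\abs t<\kappa\lambda$ and using $\nu(G_i\cap e^{-tY_i}B(g,K\lambda))\le C\lambda^{Q-1}$, one applies H\"older in $\nu$ and then in $t$ to bound this by $\lambda^{\frac1q-1}$ times (the $t$-average over $]-\kappa\lambda,\kappa\lambda[$ of) $\bigl(\int_{G_i}\abs{\varphi^i(e^{tY_i}h)-\varphi^i(g)}^q\dif\nu(h)\bigr)^{1/q}$; the extra factor $\rho(h^{-1}g)^{(Q+\alpha q)/q}$ in the Sobolev--Slobodetski\u\i{} norm is comparable to $\lambda^{(Q+\alpha q)/q}=\lambda^{Q/q+\alpha}$ on that ball, so dividing and multiplying by it produces $\lambda^{1/q}$ times the maximal-type quantity $\mathcal{N}(0)$ controlled by the double integral defining $\Norm{\varphi^i}_{\mathrm{\dot S}^{\alpha,q}}$. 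For the gradient bound, the computations \eqref{eqYiulambda}--\eqref{eqYjulambda} carry over unchanged in structure: each derivative $Y_j\varphi^i_\lambda(ge^{tY_i})$ is a sum of terms $\frac{t^{j-1}}{(\lambda^2+t^2)^{j/2}}(\varphi^i\ast Y_k^R I_{\sqrt{\lambda^2+t^2}}D^{(k)}_j\eta)$, and writing $\varphi^i\ast Y_k^R(\cdot)=\int(\varphi^i(h)-\varphi^i(g))Y_k^R I_{\sqrt{\lambda^2+t^2}}D^{(k)}_j\eta(h^{-1}g)\dif\mu(h)$ (the constant drops out since $\int Y_k^R\eta=0$), the kernel now has size $\sim\lambda^{-Q-1}$, yielding an extra $\lambda^{-1}$ and hence the bound $C\lambda^{\frac1q-1}\mathcal{N}(0)$ after the same slicing and H\"older steps.

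The main obstacle is getting the fractional maximal function $\mathcal{N}$ right so that both (i) the pointwise bounds above hold with the stated powers of $\lambda$ — this needs the oscillation $\varphi^i(e^{tY_i}h)-\varphi^i(g)$ to be absorbed into a single function of $t$ evaluated via a one-dimensional Hardy--Littlewood maximal operator at $t=0$, which is what makes the final integration over $\R$ and the maximal theorem on $\R$ applicable — and (ii) $\Norm{\mathcal{N}}_{\mathrm{L}^q(\R)}\lesssim\Norm{\varphi^i}_{\mathrm{\dot S}^{\alpha,q}(G)}$. The natural candidate is something like
\[
  \mathcal{N}(t)=\sup_{r>0}\Bigl(\frac{1}{r}\int_{t-\kappa r}^{t+\kappa r}\frac{1}{r^{Q-1}}\int_{G_i}\int_{B(e^{sY_i}h,Kr)}\frac{\abs{\varphi^i(e^{sY_i}h)-\varphi^i(g')}^q}{r^{Q+\alpha q}}\dif\mu(g')\dif\nu(h)\dif s\Bigr)^{1/q};
\]
one must check that the $t$-integral of $\mathcal{N}^q$ unwinds (via Fubini and the $r$-supremum absorbed into a full maximal operator on $G$, whose $\mathrm{L}^q$-boundedness on the doubling space $G$ is classical) into $\iint_{G\times G}\abs{\varphi^i(h)-\varphi^i(g)}^q\rho(g^{-1}h)^{-Q-\alpha q}\dif\mu\dif\mu$ up to the comparison $\rho(g^{-1}h)\sim r$ on the relevant shells. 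This bookkeeping — coordinating the radius $r=\sqrt{\lambda^2+t^2}$, the coset slicing, and the doubling maximal function on $G$ versus the one-dimensional one on $\R$ — is the only genuinely new work; everything else is dictated by Theorem~\ref{mainTheorem} and the scaling identity $\alpha q=Q$.
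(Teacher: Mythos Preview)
Your overall strategy---slice $G$ by cosets of $G_i$, split $\varphi^i$ as $\varphi^i_\lambda+(\varphi^i-\varphi^i_\lambda)$ with the same mollifier $u_\lambda(ge^{tY_i})=(u\ast I_{\sqrt{\lambda^2+t^2}}\eta)(g)$, use the divergence-free condition on the $\varphi^i_\lambda$ piece, optimize in $\lambda$, and close with H\"older and the one-dimensional maximal theorem---is exactly the paper's. The only new ingredient in the fractional case is the approximation lemma (the paper's Lemma~\ref{lemmaApproximationFracts}), and that is where your sketch has a genuine gap.

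When you write $\varphi^i_\lambda(g)-\varphi^i(g)=\int_G(\varphi^i(h)-\varphi^i(g))\,I_\lambda\eta(h^{-1}g)\,d\mu(h)$ and estimate by $\lambda^{-Q}\int_{B(g,K\lambda)}|\varphi^i(h)-\varphi^i(g)|\,d\mu(h)$, the bound you obtain after slicing and H\"older is still a function of the \emph{fixed evaluation point} $g\in G_i$: you end up with quantities like $\bigl(\int_{G_i}|\varphi^i(e^{tY_i}h)-\varphi^i(g)|^q\,d\nu(h)\bigr)^{1/q}$. Taking the $\mathrm{L}^\infty(G_i)$ norm then forces a supremum over $g\in G_i$, and there is no way to dominate that supremum by a one-dimensional maximal function of an $\mathrm{L}^q(\R)$ function whose $\mathrm{L}^q$ norm equals the Sobolev--Slobodetski\u\i{} seminorm. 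Your proposed $\mathcal N$ at the end has the right shape (an integral over a second variable $g'$), but the derivation you give does not produce it; the step ``$\rho(h^{-1}g)\sim\lambda$ on that ball'' is only an upper bound and does not convert the $g$-anchored oscillation into the double integral you need. (Incidentally, your intermediate exponent $\lambda^{\frac1q-1}$ before inserting $\rho$ is also off; the slicing and H\"older steps give $\lambda^{-(Q-1)/q}$.)

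The paper's remedy is to keep the $\tau$-integral $u_\lambda(g)-u(g)=\int_0^\lambda u\ast\tfrac{\partial}{\partial\tau} I_\tau\eta\,d\tau$ and note that $\tilde\eta=\tfrac{\partial}{\partial\tau}I_\tau\eta\big\vert_{\tau=1}$ has \emph{mean zero}. This allows one to insert, for free, an average over a second variable $k\in B(g,\tau)$, so that the oscillation becomes $u(h)-u(k)$ with \emph{both} $h$ and $k$ integrated over balls of radius $\sim\tau$. After inserting the weight $\rho(k^{-1}h)^{-(Q/q+\alpha)}$ (legitimate since $\rho(k^{-1}h)\lesssim\tau$ there), slicing the $h$-integral by cosets, applying H\"older, and extending the $k$-integral from $B(g,\tau)$ to all of $G$, the bound becomes \emph{independent of $g$}: one obtains $C\lambda^{\alpha-(Q-1)/q}M(I_{\alpha,q})(0)=C\lambda^{1/q}M(I_{\alpha,q})(0)$ with
\[
  I_{\alpha,q}(t)=\Bigl(\int_{G_i}\int_G \frac{|u(e^{tY_i}h)-u(k)|^q}{\rho(k^{-1}e^{tY_i}h)^{Q+\alpha q}}\,d\mu(k)\,d\nu(h)\Bigr)^{1/q}.
\]
Then $\|I_{\alpha,q}\|_{\mathrm{L}^q(\R)}=\|u\|_{\dot S^{\alpha,q}(G)}$ by Fubini, and no maximal operator on $G$ is needed---only the one-dimensional Hardy--Littlewood maximal function on $\R$, exactly as in Theorem~\ref{mainTheorem}. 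The gradient estimate is handled the same way. Once this lemma is in place, the rest of your argument goes through verbatim.
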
	

The new ingredient needed to prove Theorem~\ref{theoremFract} is 

\begin{lemma}
\label{lemmaApproximationFracts}
Let $\alpha \in ]0,1[$ and $q \ge 1$. If $\alpha q > Q-1$, there exists $C_{\alpha,q} >0$ such that, for every $u \in C^\infty_c(G)$, $\lambda > 0$, and $1 \le i \le m$, there exists $u_\lambda \in C^\infty(G)$ such that 
\begin{align}
  \label{ineqLinftyApproxFract} \Norm{u-u_\lambda}_{\mathrm{L}^\infty(G_i)} &\le C_{\alpha,q} \lambda^{\alpha-\frac{Q-1}{q}}M(I_{\alpha,q})(0)\;.\\
  \label{ineqLipschitzApproxFract} \Norm{\nabla_b u_\lambda}_{\mathrm{L}^\infty(G)} &\le C_{\alpha,q} \lambda^{\alpha-\frac{Q-1}{q}-1} M(I_{\alpha,q})(0)\;.
\end{align}
where
\[
  I_{\alpha,q}(t)=\Bigl(\int_{G_i}\int_G \frac{\abs{u(e^{tY_i}h)-u(g)}^q}{\rho(g^{-1}h)^{Q+\alpha q}}\dif \mu(g)\dif \nu(h)\Bigr)^\frac{1}{q}\;.
\]
\end{lemma}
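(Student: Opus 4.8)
The plan is to mimic the proof of Lemma~\ref{lemmaApproximation}, keeping the same mollification family but tracking a fractional quantity instead of $\nabla_b u$. First I would set $u_\lambda(ge^{tY_i}) = (u \ast I_{\sqrt{\lambda^2+t^2}}\eta)(g)$ with $\int_G \eta \dif\mu = 1$, exactly as before. The difference from the gradient case is that in estimating $u_\lambda - u$ on $G_i$ one should not integrate by parts to produce $Y_k u$; instead one writes, using $\int_G \eta \dif\mu = 1$ and hence $\int_G I_\tau\eta \dif\mu = 1$, that
\[
  u_\lambda(g) - u(g) = \int_0^\lambda \frac{\partial}{\partial\tau}\bigl[(u \ast I_\tau\eta)(g)\bigr]\dif\tau,
\]
and then, since $\frac{\partial}{\partial\tau}I_\tau\eta$ integrates to zero, one has $(u \ast \frac{\partial}{\partial\tau}I_\tau\eta)(g) = \int_G \bigl(u(gh^{-1}) - u(g)\bigr)\frac{\partial}{\partial\tau}I_\tau\eta(h)\dif\mu(h)$, with $\frac{\partial}{\partial\tau}I_\tau\eta$ supported in $B(e,K\tau)$ and bounded by $C\tau^{-Q-1}$. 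So the oscillation $|u(gh^{-1}) - u(g)|$ replaces the gradient term, and $\rho(h) \le K\tau$ on the support.

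Next I would localize to the slices. As in the proof of Lemma~\ref{lemmaApproximation}, the support condition $h \in B(e,K\tau)$ forces $gh^{-1} \in e^{sY_i}G_i$ only for $|s| \lesssim \tau$; writing $g \in G_i$ and integrating the $s$-variable out, the contribution of each $\tau$-shell is controlled, after H\"older in the $h$-variable over $B(e,K\tau)$ (whose $\nu$-slices have measure $\lesssim \tau^{Q-1}$), by
\[
  \tau^{-Q-1}\cdot \tau^{1}\cdot\bigl(\tau^{Q-1}\bigr)^{1-\frac1q}\Bigl(\int_{G_i}\int_G \frac{|u(e^{sY_i}h') - u(g)|^q}{\rho(g^{-1}h')^{Q+\alpha q}}\,\rho(g^{-1}h')^{Q+\alpha q}\,\tau^{-Q-\alpha q}\cdots\Bigr)^{1/q},
\]
the point being to insert and remove the weight $\rho(g^{-1}h')^{Q+\alpha q} \sim \tau^{Q+\alpha q}$ on the support so that the inner double integral becomes exactly $I_{\alpha,q}(s)^q$ up to constants. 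Collecting powers of $\tau$ gives an exponent $-Q-1+1+(Q-1)(1-\tfrac1q)+\tfrac{Q+\alpha q}{q} = \alpha - \tfrac{Q-1}{q} - 1 + 1$; I should double-check the bookkeeping, but the hypothesis $\alpha q > Q-1$ is precisely what makes $\tau^{\alpha - (Q-1)/q - 1}$ integrable near $0$, and integrating $s$ against the $\tau$-interval and passing to the maximal function of $I_{\alpha,q}$ yields \eqref{ineqLinftyApproxFract}. For \eqref{ineqLipschitzApproxFract} I would differentiate $u_\lambda$ just as in \eqref{eqYiulambda}--\eqref{eqYjulambda}, again refraining from the integration by parts that produced $Y_k u$, instead using the zero-mean property of $Y_k^R I_\tau D^{(k)}_j\eta$ to subtract $u(g)$; each derivative costs one extra power $\tau^{-1}$, accounting for the shift from $\alpha - \tfrac{Q-1}{q}$ to $\alpha - \tfrac{Q-1}{q} - 1$, and the sums over $j$ are handled by the same $|t|^{j-1}(\lambda^2+t^2)^{-j/2}$ bounds as before, each term being dominated near $t=0$ and decaying at infinity.

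The main obstacle I anticipate is the careful handling of the fractional weight when it is ``stripped off'' and ``put back on'': one must verify that on the relevant support $\rho(g^{-1}h')$ is comparable to $\tau$ uniformly (using the quasi-norm axioms for $\rho$ and the triangle-type inequality, together with $g, h' $ in a $\tau$-ball of $G_i$ and the $s$-variable of size $\lesssim\tau$), and that the resulting $g$-integral genuinely reconstructs $I_{\alpha,q}$ rather than some truncated version — i.e.\ that extending the $g$-integration from $B(h',K\tau)$ to all of $G$ only increases the quantity, which is automatic since the integrand is nonnegative. A secondary subtlety is that $I_{\alpha,q}$ now involves an integral over all of $G$ in the $g$-variable (not just a slice), so when one finally integrates the slice estimate in $t$ and applies the one-dimensional maximal theorem, the Fubini step must be arranged so that $\int_\R I_{\alpha,q}(t)^q \dif t$ reassembles $\Norm{\varphi}_{\dot{\mathrm S}^{\alpha,q}(G)}^q$; this is where the normalization $\mu(A) = \int_\R \nu(G_i \cap e^{-tY_i}A)\dif t$ does its work, exactly as in the proof of Theorem~\ref{mainTheorem}.
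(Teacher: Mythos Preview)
Your plan has a genuine gap at the crucial step, and it is exactly the place where the fractional case departs from Lemma~\ref{lemmaApproximation}. When you subtract $u(g)$ using the zero mean of $\frac{\partial}{\partial\tau}I_\tau\eta$, the point $g\in G_i$ is the \emph{fixed} evaluation point at which you are bounding $|u_\lambda(g)-u(g)|$. After slicing and H\"older you obtain at best
\[
  |u_\lambda(g)-u(g)|\;\lesssim\;\int_0^\lambda \tau^{\beta}\Bigl(\int_{G_i}\frac{|u(e^{sY_i}h')-u(g)|^q}{\rho(g^{-1}e^{sY_i}h')^{Q+\alpha q}}\dif\nu(h')\Bigr)^{1/q}\dif\tau
\]
for some exponent $\beta$, with $g$ still fixed on the right-hand side. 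There is no ``$g$-integration from $B(h',K\tau)$'' to extend to all of $G$: your displayed expression with $\int_{G_i}\int_G$ and your obstacle paragraph both presuppose an integral in $g$ that simply is not there. Consequently the right-hand side is a pointwise fractional quantity at the single point $g$, which is not controlled by $I_{\alpha,q}$ or by $M(I_{\alpha,q})(0)$ (the latter involves an integral over the \emph{whole} slice $G_i$ in one variable and over all of $G$ in the other). Your power count also comes out to $\alpha-1+1/q$ rather than $\alpha-(Q-1)/q$, which is a symptom of the same problem.

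The missing idea, and what the paper does, is to manufacture the second integration variable by an extra averaging. Since $\int_G\tilde\eta\dif\mu=0$ one may subtract $u(k)$ for \emph{any} $k$, and then average over $k\in B(g,\tau)$:
\[
  u_\lambda(g)-u(g)=\int_0^\lambda\!\!\int_G \frac{1}{\mu(B(g,\tau))}\int_{B(g,\tau)}\!\!\bigl[u(h)-u(k)\bigr]\,\tfrac{1}{\tau}I_\tau\tilde\eta(h^{-1}g)\dif\mu(k)\dif\mu(h)\dif\tau.
\]
Now both $h$ and $k$ range over balls of radius $\sim\tau$ about $g$; after inserting the weight $\rho(k^{-1}h)^{-Q/q-\alpha}$, slicing the $h$-integral along $e^{tY_i}G_i$, and applying H\"older jointly in $(h,k)$, the inner double integral is dominated by $I_{\alpha,q}(t)^q$ (extend the $k$-integration from $B(g,\tau)$ to all of $G$ and the $h$-slice to all of $G_i$). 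The two ball volumes $\tau^Q\cdot\tau^{Q-1}$ produce the extra $(Q-1)/q$ in the exponent, and the integrability condition $\alpha q>Q-1$ enters when integrating $\tau^{\alpha-(Q-1)/q-1}$. The gradient estimate \eqref{ineqLipschitzApproxFract} is then handled the same way; your remarks about \eqref{eqYiulambda}--\eqref{eqYjulambda} are correct once the same double-averaging replaces the single subtraction.
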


\begin{proof}
Define $u_\lambda$ as in Lemma~\ref{lemmaApproximation}.
In order to check \eqref{ineqLinftyApproxFract}, we estimate $u_\lambda(g)-u(g)$ for $g \in G_i$. One has clearly 
\[
  u_\lambda(g)-u(g)=\int_0^{\lambda} \frac{\partial }{\partial \tau}\Bigl[ u \ast I_\tau \eta\Bigr](g) \dif \tau
                =\int_0^{\lambda} \Bigl[u \ast \frac{\partial }{\partial \tau} I_\tau \eta\Bigr] (g)\dif \tau\;.
\]
One writes now
\[
  \frac{\partial }{\partial \tau} I_\tau \eta=\frac{1}{\tau} I_\tau \Tilde{\eta}\;,
\]
where
\[
  \Tilde{\eta}=\frac{\partial }{\partial \tau} I_\tau\eta\Big\vert_{\tau=1}\;.
\]
Note that 
\[
  \int_G \Tilde{\eta}\dif \mu=\frac{d}{d\tau} \int_G I_\tau \eta\dif \mu=\frac{d}{d\tau}1=0\;.
\]

This brings us to
\[
\begin{split}
  u_\lambda(g)-u(g)&= \int_0^{\lambda}\int_{G} u(h)\, \frac{1}{\tau} I_\tau \Tilde{\eta}(h^{-1}g)\dif \mu(h)\dif \tau \\
  &= \int_0^{\lambda}\int_{G}\frac{1}{\mu (B(g,\tau))}\int_{B(g,\tau)} [u(h)-u(k)]\\
&\hspace{10em}\frac{1}{\tau} I_\tau \Tilde{\eta}(h^{-1}g)\dif \mu(k)\dif \mu(h)\dif \tau\;.
\end{split}
\]
Thus, for some $K > 0$,
\[
\begin{split}
  \vert u_\lambda(g)&-u(g)\vert \\
 &\le C \int_0^{\lambda} \frac{1}{\tau^{2Q+1}} \biggl(\int_{B(g,K\tau)}\int_{B(g,\tau)} \abs{u(h)-u(k)}\dif \mu(k)\dif \mu(h)\biggr)\dif \tau \\
&\le C' \int_0^{\lambda} \frac{\tau^{\alpha+\frac{Q}{q}}}{\tau^{2Q+1}} \biggl(\int_{B(g,K\tau)}\int_{B(g,\tau)} \frac{\abs{u(h)-u(k)}}{\rho(k^{-1}h)^{\frac{Q}{q}+\alpha}}\dif \mu(k)\dif \mu(h)\biggr)\dif \tau\;.
\end{split}
\]
Now note that $B(g,K\tau) \cap e^{tY_i}G_i = \emptyset$ 
when 
$\abs{t} 
\ge 
\kappa \tau$, 
for some $\kappa < \infty$. Therefore
\[
\begin{split}
  \abs{u_\lambda(g)-u(g)} 
&\le C \int_0^{\lambda} \frac{\tau^{\alpha+\frac{Q}{q}}}{\tau^{2Q+1}} \int_{]-\kappa \tau,\kappa \tau[} \int_{G_i \cap e^{-tY_i} B(g,K\tau)}\\
&\hspace{8em}\int_{B(g,\tau)} \frac{\abs{u(h)-u(k)}}{\rho(k^{-1}h)^{\frac{Q}{q}+\alpha}}\dif \mu(k) \dif \nu (h) \dif t\dif \tau\;.
\end{split}
\]
Since $\nu(e^{-tY} B(g,K\tau) \cap G_i)\le C s^{Q-1}$, we obtain, by H\"older's inequality,
\[
\begin{split}
  \abs{u_\lambda(g)-u(g)} 
&\le C'\int_0^{\lambda} \frac{\tau^{\alpha+\frac{Q}{q}+(2Q-1)(1-\frac{1}{q})}} {\tau^{2Q}}\frac{1}{2\kappa \tau} \\
&\hspace{2em}\int_{]-\kappa \tau,\kappa \tau[} \Bigl(\int_{G_i} \int_{B(g,\tau)} \frac{\abs{u(h)-u(k)}^q}{\rho(k^{-1}h)^{Q+\alpha q}}\dif \mu(k)\dif \nu (h)\Bigr)^\frac{1}{q} \dif t\dif \tau\\
&=C'' \lambda^{\alpha-\frac{Q-1}{q}} M(I_{\alpha,q})(0)\;.
\end{split}
\]
(The condition $\alpha q > Q-1$ was used to integrate $\tau^{\alpha-\frac{Q-1}{q}-1}$.)
The proof of \eqref{ineqLipschitzApproxFract} is similar.
\end{proof}

The method above also works if we define the Sobolev spaces of fractional order using the Triebel--Lizorkin definition \cite{HanSwayer1994,VagiGatto1999}.

\subsection{Higher order conditions}
In the Euclidean case, estimates similar to Theorem~\ref{mainTheorem} still hold when the condition on the divergence is replaced by a condition on higher-order derivatives \cite{VanSchaftingenHigher}. The same ideas apply to homogeneous groups.

We consider sections given by maps $F : G \to \bigotimes^k T_bG$, where $\bigotimes^k$ is the tensor product. These sections can be identified as differential operators of order $k$, given by
\[
  Fu(g)=\sum_{i_1,\dotsc,i_k \in \{1, \dotsc,m\}} F_{i_1\dotsm i_k} (g)\, (Y_{i_1} \dotsm Y_{i_k} u)(g)\;.
\]
We shall call such sections $k$--order differential operators. 
Now we consider $\bigotimes^k T_b^*G=(\bigotimes^k T_bG)^*$, and $\Sym (\bigotimes^k T_b^*G)$, the vector subspace of $\bigotimes^k T_b^*G$ consisting of tensors which are invariant under the action of the symmetric group $S_k$.

\begin{theorem}
\label{theoremHigher}
Let $k \ge 1$, $F \in \mathrm{L}^1(G;\bigotimes^kT_bG)$ and $\varphi \in C^\infty_c(G,\Sym(\bigotimes^k T_b^*G))$. If for every $\psi \in C^\infty_c(G)$, 
\[
  \int_{G} F \psi \dif \mu =0\;,
\]
then,  
\[
  \Bigl\lvert\int_{G} \dualprod{\varphi}{F}\dif \mu\Bigr\rvert \le C_k \Norm{F}_{\mathrm{L}^1(G)}\Norm{\nabla_b \varphi}_{\mathrm{L}^Q(G)}\;.
\]
\end{theorem}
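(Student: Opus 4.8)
The plan is to run the argument of Section~\ref{sectionMainProof}, the only genuinely new points being a tensor-valued version of Lemma~\ref{lemmaApproximation} and an iteration of the integration by parts that exploits the $k$-th order hypothesis. Write $\varphi^{i_1\dotsm i_k}=\dualprod{\varphi}{Y_{i_1}\otimes\dotsm\otimes Y_{i_k}}$; these scalar functions are invariant under all permutations of $(i_1,\dotsc,i_k)$ because $\varphi$ takes values in $\Sym(\bigotimes^k T^*_bG)$. Since the Haar measure $\mu$ is bi-invariant, each left-invariant field $Y_k$ is skew-adjoint with respect to $\mu$, so the hypothesis that $\int_GF\psi\dif\mu=0$ for every $\psi\in C^\infty_c(G)$ is equivalent, after $k$ integrations by parts, to
\[
  \sum_{i_1,\dotsc,i_k}Y_{i_k}\dotsm Y_{i_1}F_{i_1\dotsm i_k}=0
\]
in the sense of distributions. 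Since $\int_G\dualprod{\varphi}{F}\dif\mu=\sum_{i_1,\dotsc,i_k}\int_G\varphi^{i_1\dotsm i_k}F_{i_1\dotsm i_k}\dif\mu$, it suffices, exactly as in the proof of Theorem~\ref{mainTheorem}, to fix $1\le i\le m$, to slice $G$ into the cosets $e^{tY_i}G_i$, and to bound $\int_{G_i}\sum_{i_1,\dotsc,i_k}\varphi^{i_1\dotsm i_k}F_{i_1\dotsm i_k}\dif\nu$ on a single slice.

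On the slice $G_i$ I would apply Lemma~\ref{lemmaApproximation} componentwise to each $\varphi^{i_1\dotsm i_k}$, with a common $\lambda>0$ and the same $i$, obtaining $\varphi^{i_1\dotsm i_k}_\lambda\in C^\infty(G)$ with $\Norm{\varphi^{i_1\dotsm i_k}-\varphi^{i_1\dotsm i_k}_\lambda}_{\mathrm{L}^\infty(G_i)}\le C\lambda^{1/Q}M(I)(0)$ and $\Norm{\nabla_b\varphi^{i_1\dotsm i_k}_\lambda}_{\mathrm{L}^\infty(G)}\le C\lambda^{1/Q-1}M(I)(0)$, with $I(t)=\bigl(\int_{G_i}\abs{\nabla_b\varphi(e^{tY_i}h)}^Q\dif\nu(h)\bigr)^{1/Q}$ and $\abs{\nabla_b\varphi}^2=\sum_{i_1,\dotsc,i_k}\abs{\nabla_b\varphi^{i_1\dotsm i_k}}^2$. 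The difference $\varphi^{i_1\dotsm i_k}-\varphi^{i_1\dotsm i_k}_\lambda$ is absorbed against $F$ by H\"older's inequality as in \eqref{ineqLinftyEstimate}. For the remaining term I iterate the device of the proof of Theorem~\ref{mainTheorem}: from $\int_{G_i}F_{i_1\dotsm i_k}\varphi^{i_1\dotsm i_k}_\lambda\dif\nu=\int_{-\infty}^0\int_{G_i}Y_i\bigl[F_{i_1\dotsm i_k}\varphi^{i_1\dotsm i_k}_\lambda\bigr]\dif\nu\dif s$, one expands by the Leibniz rule, uses the cancellation identity together with the symmetry of $\varphi$ to trade every transverse derivative falling on $F$ for tangential derivatives $Y_j$, $j\ne i$, and moves these back onto $\varphi_\lambda$ by integration by parts on the normal subgroup $G_i$ (legitimate since $Y_j\in\mathfrak{g}_i$ and $\nu$ is right-invariant). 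After $k$ rounds no derivative acts on $F$, and one is left with terms $F_{i_1\dotsm i_k}\,(D\varphi_\lambda)$ with $D$ a first-order left-invariant operator, which gives the analogue of \eqref{ineqLipchitzEtimate}: $\bigl\lvert\int_{G_i}\sum F_{i_1\dotsm i_k}\varphi^{i_1\dotsm i_k}_\lambda\dif\nu\bigr\rvert\le C\lambda^{1/Q-1}\Norm{F}_{\mathrm{L}^1(G)}M(I)(0)$. Choosing $\lambda$ as in \eqref{equationlambda}, translating in $t$, integrating over $\R$ and invoking the maximal function theorem then closes the estimate exactly as in Section~\ref{sectionMainProof}.

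The main obstacle, which the Euclidean argument of \cite{VanSchaftingenHigher} does not meet, is the non-commutativity of $G$: every integration by parts produces commutator terms, so the iteration above does not terminate cleanly but leaves remainders in which strictly fewer derivatives act on $F$. Since $[V_a,V_b]\subset V_{a+b}$, each commutator lowers the differentiation order on $F$ while raising homogeneity, so the extra powers of the scale $\lambda^{1/Q}$ work in our favour; the remainders can then be controlled by an induction on the order $k$, the order-$\ell$ remainders with $\ell<k$ being handled either by the statement for order $\ell$ or directly by $\Norm{F}_{\mathrm{L}^1(G)}\Norm{\nabla_b\varphi_\lambda}_{\mathrm{L}^\infty(G)}$ at the corresponding scale. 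Keeping track of the precise scaling of each commutator term, and verifying that the symmetry of $\varphi$ is precisely what makes the $k$-fold integration by parts collapse onto the single identity $\sum_{i_1,\dotsc,i_k}Y_{i_k}\dotsm Y_{i_1}F_{i_1\dotsm i_k}=0$, is the technical heart of the argument; the rest is a transcription of the proof of Theorem~\ref{mainTheorem}.
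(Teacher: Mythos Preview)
Your proposal has a genuine gap. After a single application of the fundamental theorem of calculus on the slice you obtain one transverse derivative $Y_iF_{i_1\dotsm i_k}$, but the cancellation hypothesis is of order $k$: to invoke it you must produce $Y_i^kF$ on the slice, and when those $k$ derivatives are shifted back onto $\varphi_\lambda$ (either from the Leibniz expansion or from the tangential integration by parts after trading $Y_i^kF_{i\dotsm i}$ for $-\sum_{(j_1,\dotsc,j_k)\ne(i,\dotsc,i)}Y_{j_k}\dotsm Y_{j_1}F_{j_1\dotsm j_k}$), they land as derivatives of order up to $k$, not~$1$. Your claim that ``one is left with terms $F_{i_1\dotsm i_k}\,(D\varphi_\lambda)$ with $D$ a first-order left-invariant operator'' is therefore incorrect, and Lemma~\ref{lemmaApproximation}, which controls only $\nabla_b\varphi_\lambda$, cannot close the estimate. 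This obstruction is not a commutator phenomenon: it is already present on~$\R^n$.

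The paper supplies two ingredients you are missing. First, it uses the fact that rank-one tensors $u\,(X^*)^{\otimes k}$ span $\Sym(\bigotimes^k T_b^*G)$ to reduce to estimating $\int_G F_{1\dotsm 1}\,u\dif\mu$ for a single scalar $u$; this is where the symmetry of $\varphi$ is actually spent, not inside the integration by parts. Second, and crucially, it proves Lemma~\ref{lemmaApproximationHigher}, a strengthening of Lemma~\ref{lemmaApproximation} giving
\[
\Norm{\nabla_b^{\ell} u_\lambda}_{\mathrm{L}^\infty(G_ie^{tY_i})}\le C_\ell\,(\lambda^2+t^2)^{\frac{1}{2Q}-\frac{\ell}{2}}\,M(I)(0)
\]
for every $\ell\le k$. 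In the half-space step (Lemma~\ref{lemmaHigher}) a weighted $k$-th order Taylor identity in $s$ produces $\partial_s^kF_{1\dotsm 1}=Y_1^kF_{1\dotsm 1}$ in one stroke, together with a weight $s^{k-1}$; after the cancellation and tangential integration by parts one faces $F_{j_1\dotsm j_k}$ against derivatives of $u_\lambda$ of order up to $k$, and it is the weight $s^{k-1}$ that exactly compensates the blow-up $(\lambda^2+s^2)^{\frac{1}{2Q}-\frac{k}{2}}$ so that the $s$-integral still scales like $\lambda^{\frac{1}{Q}-1}$. The commutator bookkeeping you flag is absorbed into the proof of Lemma~\ref{lemmaApproximationHigher} (the representation~\eqref{etaRepresentation}), not into the slice estimate itself.
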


A restriction appears in the statement of Theorem~\ref{theoremHigher}: for every $g \in G$,  $\varphi(g)$ should be a \emph{symmetric} $k$--linear form. On $\R^n$ this restriction is not really restrictive, since all vector fields commute, so that every $k$--order differential operator is symmetric. This is not any more the case on a noncommutative group, hence the question arises whether the restriction to symmetric $k$--linear forms is essential. In the particular setting of the three-dimensional Heisenberg group this gives:

\begin{openproblem}
Consider the Heisenberg group $\Hei^1$, which is a three-dimensional homogeneous group such that $X=Y_1$, $Y=Y_2$ and $T=[X,Y]$. Assume that $F_i \in \mathrm{L}^1(\Hei^1)$, for $1 \le i \le 4$. If
\[
  T F_1+X^2F_2+Y^2F_3+(XY+YX)F_4=0,
\]
then, by Theorem~\ref{theoremHigher}, $F_i \in \mathrm{\dot{S}}^{-1,4/3}(\Hei^1)$, for $i=2,3,4$. Does one also have $F_1 \in \mathrm{\dot{S}}^{-1,4/3}(\Hei^1)$?
\end{openproblem}

The next Lemma is the essential step in the proof of Theorem~\ref{theoremHigher}.

\begin{lemma}
\label{lemmaHigher}
If $k \ge 1$, $(F_{i_1 \cdots i_k})_{1 \le i_l \le m} \in \mathrm{L}^1(G)$ and
\begin{equation}
\label{condHigherOrder}
 \sum_{i_1,\dotsc,i_k \in \{1, \dotsc,m\}}Y_{i_1} \dots Y_{i_k} F_{i_1 \dots i_k}=0\;,
\end{equation}
then for every $u \in C^\infty_c(G)$,
\[
  \Bigl\lvert\int_{G} F_{1 \dotsm 1}u  \dif \mu\Bigr\rvert \le C_k\Norm{F}_{\mathrm{L}^1} \Norm{\nabla_b u}\;.
\]
\end{lemma}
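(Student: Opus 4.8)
The plan is to reduce Lemma~\ref{lemmaHigher} to the first-order case treated in Theorem~\ref{mainTheorem} by peeling off derivatives one at a time. First I would rewrite the left-hand side by isolating the distinguished coefficient $F_{1\dotsm 1}$ and using the cancellation hypothesis \eqref{condHigherOrder} to express $Y_1^k F_{1\dotsm 1}$ in terms of the other $F_{i_1 \dotsm i_k}$ where at least one index $i_l$ differs from $1$. The key observation, exactly as in the Euclidean case \cite{VanSchaftingenHigher}, is that $\int_G F_{1\dotsm 1} u \dif \mu$ can, after a sequence of $k-1$ integrations by parts (paying for each with a derivative of $u$, which is harmless since we only need one power of $\nabla_b u$ and the remaining $u$-derivatives get absorbed into an $\mathrm{L}^\infty$ test-type term or into the inductive structure), be recast as a pairing $\int_G \dualprod{\varphi'}{F'}\dif \mu$ where $F'$ is a genuinely first-order, genuinely divergence-free $\mathrm{L}^1$ vector field and $\varphi'$ is built from the $(k-1)$-th order derivatives of $u$.

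More concretely, I would proceed by induction on $k$. For $k=1$ the statement is a special case of Theorem~\ref{mainTheorem} (take $\varphi = u\, Y_1^*$). For the inductive step, group the terms of \eqref{condHigherOrder} according to their leftmost index: writing $G_j = \sum_{i_2,\dotsc,i_k} Y_{i_2}\dotsm Y_{i_k} F_{j i_2 \dotsm i_k}$, the hypothesis says $\sum_{j=1}^m Y_j G_j = 0$, i.e.\ the vector field $(G_1,\dotsc,G_m)$ is divergence-free in the weak sense. Each $G_j$ is an $\mathrm{L}^1$-coefficient differential operator of order $k-1$ to which the inductive hypothesis (or rather its coefficient-level version) applies. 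Pairing $(G_1,\dotsc,G_m)$ against the section $g \mapsto (Y_2\dotsm Y_k u)(g)$ and invoking Theorem~\ref{mainTheorem} produces a bound by $\Norm{F}_{\mathrm{L}^1}\Norm{\nabla_b(Y_2\dotsm Y_k u)}_{\mathrm{L}^Q}$; but $\nabla_b Y_2 \dotsm Y_k u$ involves $k$-th order derivatives of $u$ rather than a single $\nabla_b u$, so this naive pairing is the wrong target. The correct move is instead to keep the distinguished term $F_{1\dotsm 1}$ paired with $u$ itself, and to use the symmetry reduction: since in Theorem~\ref{theoremHigher} the test section $\varphi$ is symmetric, one can symmetrize the operator identity so that $\int_G F_{1\dotsm 1}u\dif\mu$ appears with a definite combinatorial weight among $\int_G F_{i_1\dotsm i_k} (Y_{i_1}^* \dotsm \text{appropriate dual})\dif\mu$ terms, each of which is handled by peeling \emph{from the right}: write $Y_{i_1}\dotsm Y_{i_k} F_{i_1\dotsm i_k}$, integrate by parts in $Y_{i_k}$ against $u$ to land one derivative $Y_{i_k}u$, and recurse on the order-$(k-1)$ data $(Y_{i_1}\dotsm Y_{i_{k-1}} F_{i_1\dotsm i_k})$ against $Y_{i_k}u$.

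The genuine technical content — and the step I expect to be the main obstacle — is handling the noncommutativity: integrating by parts $Y_{i_k}$ past the earlier $Y_{i_1},\dotsc,Y_{i_{k-1}}$ and past the coefficient produces commutator terms $[Y_a, Y_b] = $ combinations of higher-stratum vector fields, which are \emph{not} in $V_1$ and hence not directly controlled by $\nabla_b$. In the Euclidean case these commutators vanish; here they do not, and one must argue that the commutator terms, when traced through the induction, always reattach to the $F$-side as honest $\mathrm{L}^1$ coefficients (raising the formal order of the operator but not its $\mathrm{L}^1$ norm) while the surviving cancellation is preserved — i.e.\ that the symmetrized higher-order divergence-free condition \eqref{condHigherOrder} is stable under this reshuffling. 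This is precisely where the symmetry hypothesis on $\varphi$ is used, and where the restriction in Theorem~\ref{theoremHigher} becomes essential: it is what guarantees the commutator corrections can be absorbed. Once that bookkeeping is in place, the final estimate is just an application of Theorem~\ref{mainTheorem} to the resulting first-order divergence-free field, followed by Hölder to convert $\Norm{\nabla_b u}_{\mathrm{L}^Q}$-type factors into the claimed bound $C_k \Norm{F}_{\mathrm{L}^1}\Norm{\nabla_b u}$.
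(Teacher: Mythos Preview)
Your proposal has a genuine gap, and the paper's approach is substantially different from an inductive reduction to Theorem~\ref{mainTheorem}.

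The fatal problem with your induction is regularity. The quantities you want to feed back into either the inductive hypothesis or Theorem~\ref{mainTheorem} are not in $\mathrm{L}^1$. Your $G_j = \sum_{i_2,\dotsc,i_k} Y_{i_2}\dotsm Y_{i_k} F_{j i_2 \dotsm i_k}$ is a $(k-1)$-st order derivative of an $\mathrm{L}^1$ function, hence only a distribution; Theorem~\ref{mainTheorem} requires $F \in \mathrm{L}^1(G;T_bG)$ and gives nothing for such $G_j$. The same objection kills the ``peel from the right'' variant: after one integration by parts you are left with coefficients $Y_{i_1}\dotsm Y_{i_{k-1}} F_{i_1\dotsm i_k}$ paired against $Y_{i_k} u$, and again these coefficients are not $\mathrm{L}^1$, so the inductive hypothesis (which requires $\mathrm{L}^1$ data) does not apply. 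Your discussion of commutators and symmetry does not address this; the difficulty is not that brackets land in higher strata, it is that differentiating $F$ at all takes you out of $\mathrm{L}^1$.

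What the paper actually does is to \emph{reprove} the slicing argument of Theorem~\ref{mainTheorem} from scratch, but with a sharper approximation lemma. Lemma~\ref{lemmaApproximationHigher} constructs the same $u_\lambda$ as Lemma~\ref{lemmaApproximation} and additionally controls \emph{all} higher horizontal derivatives:
\[
  \Norm{\nabla_b^l u_\lambda}_{\mathrm{L}^\infty(G_1 e^{sY_1})} \le C_l (\lambda^2+s^2)^{\frac{1}{2Q}-\frac{l}{2}} M(I)(0)\;.
\]
The key identity is then a $k$-th order Taylor-type integration by parts in the transversal variable $s$,
\[
\int_{G_1} F_{1\dotsm 1} u_\lambda \dif\nu
=\int_{G_1}\int_{-\infty}^{0}\!\! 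F_{1\dotsm 1}\,\frac{\partial^k}{\partial s^k}\Bigl[\tfrac{s^{k-1}}{(k-1)!} u_\lambda\Bigr]\dif s\,\dif\nu
+(-1)^k\!\int_{G_1}\int_{-\infty}^{0}\!\!\tfrac{s^k}{k!} u_\lambda\, Y_1^k F_{1\dotsm 1}\dif s\,\dif\nu\;.
\]
In the first piece the Leibniz expansion produces terms $s^{l-1} Y_1^{l} u_\lambda$, and the decay $(\lambda^2+s^2)^{\frac{1}{2Q}-\frac{l}{2}}$ exactly compensates the factor $s^{l-1}$ to give the required $\lambda^{\frac{1}{Q}-1}$ after integrating in $s$. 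In the second piece one uses \eqref{condHigherOrder} to replace $Y_1^k F_{1\dotsm 1}$ by $-\sum_{(i_1,\dotsc,i_k)\ne(1,\dotsc,1)} Y_{i_1}\dotsm Y_{i_k} F_{i_1\dotsm i_k}$; since at least one $i_l\ne 1$, each such term can be integrated by parts along $G_1$ (in the tangential direction $Y_{i_l}$), moving all derivatives onto $s^{k-1} u_\lambda$ and leaving the bare $\mathrm{L}^1$ coefficient $F_{i_1\dotsm i_k}$. The same higher-order decay estimate then closes. The remainder of the proof (choice of $\lambda$, H\"older, maximal function) is identical to that of Theorem~\ref{mainTheorem}.

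In short: the proof is not by reduction to the first-order case, but by upgrading the approximation lemma so that the slicing argument survives $k$ integrations by parts in the transversal direction.
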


\begin{proof}[Proof of Theorem~\ref{theoremHigher}]
Since $k$--linear symmetric forms $\omega$ of the form 
\[
  \omega(X_1,\dotsc,X_k)=X^*(X_1)\dotsm X^*(X_k),
\]
for $X_i \in \mathfrak{g}$ and $X^* \in \mathfrak{g}^*$ generate the finite-dimensional space $\Sym(\bigotimes^k \mathfrak{g}^*)$, it is sufficient to prove a similar estimate for every $\varphi(g;X_1,\dotsc,X_k)=u(g)X^*(X_1)\dotsm X^*(X_k)$. Without loss of generality, we can assume that the kernel of $X^*$ is spanned by $Y_2,\dotsc,Y_m$, and that $X^*(Y_1)=1$. Writing $F$ as
\[
  F=\sum_{i_1,\dotsc,i_k \in \{1, \dotsc,m\}}F_{i_1 \dotsm i_k}Y_{i_1} \dotsm Y_{i_k}\;,
\]
with $F_{i_1 \dotsm i_k}\in \mathrm{L}^1(G)$, one obtains
\[
  \int_{G} \dualprod{\varphi}{F}\dif\mu=\int_{G} uF_{1 \dotsm 1}\dif\mu\;.
\]
The functions $F_{i_1\dotsm i_k}$ satisfy the assumptions of Lemma~\ref{lemmaHigher}, which yields the conclusion.
\end{proof}

We now have to prove Lemma~\ref{lemmaHigher}. The main ingredient is an improvement of Lemma~\ref{lemmaApproximation} in which the decay of higher-order derivatives of $u_\lambda$ is controlled.

\begin{lemma}
\label{lemmaApproximationHigher}
There exists $C >0$ such that, for every $u \in C^\infty_c(G)$, $\lambda > 0$, and $1 \le i \le m$, there exists $u_\lambda \in C^\infty(G)$ such that for every $t \in \R$ 
\begin{align}
  \label{ineqLinftyApproxHigher} \Norm{u-u_\lambda}_{\mathrm{L}^\infty(G_i)} &\le C \lambda^{\frac{1}{Q}}M(I)(0)\;,\\
  \label{ineqHigherApprox} \Norm{\nabla_b^k u_\lambda}_{\mathrm{L}^\infty(G_i e^{tY_i})} &\le \frac{C_k}{ \bigl(\sqrt{\lambda^2+t^2}\bigr)^{k-\frac{1}{Q}}} M(I)(0)\;.
\end{align}
\end{lemma}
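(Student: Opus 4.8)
\textbf{Proof proposal for Lemma~\ref{lemmaApproximationHigher}.}

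The plan is to reuse the very same mollification $u_\lambda(ge^{tY_i}) = (u \ast I_{\sqrt{\lambda^2+t^2}}\eta)(g)$ from Lemma~\ref{lemmaApproximation}, so that \eqref{ineqLinftyApproxHigher} is literally \eqref{ineqLinftyApprox} and requires nothing new. The work is entirely in \eqref{ineqHigherApprox}: one must control $Y_{j_1}\dotsm Y_{j_k} u_\lambda$ on the slice $G_i e^{tY_i}$ by $C_k (\lambda^2+t^2)^{-(k-1/Q)/2} M(I)(0)$, uniformly in $t$. I would first reduce, exactly as in the proof of Lemma~\ref{lemmaApproximation}, to understanding a single derivative of $u_\lambda$ along a left-invariant $Y_j$ at a point $ge^{tY_i}$ with $g \in G_i$: if $j \ne i$ one conjugates by $e^{tY_i}$ (using normality of $G_i$) to pick up $\Ad(e^{tY_i})Y_j = \tfrac{1}{t}\delta_t \Ad(e^{Y_i})Y_j$, while if $j = i$ one gets the extra factor $\tfrac{t}{\sqrt{\lambda^2+t^2}}$ from differentiating $\tau = \sqrt{\lambda^2+t^2}$; in both cases Lemma~\ref{lemmaJerison31p} converts the left-invariant derivative of $I_\tau$ into a sum $\sum_{j'} (t/\tau)^{j'} Y_{k}^R I_\tau D^{(k)}_{j'}\eta$ of right-invariant derivatives, and then \eqref{eqYRconv} transfers one right-invariant derivative onto $u$, producing $\nabla_b u$ and a mollifier $I_\tau(\text{smooth compactly supported})$.

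The new point is to iterate this $k$ times. After applying the single-derivative reduction once we have an expression of the schematic form $\sum (t/\tau)^{a} \, \tau^{-Q} \cdot (\text{convolution of } \nabla_b u \text{ with a fixed bump at scale } \tau)$, evaluated on the slice at parameter $t$; but now we must differentiate this $k-1$ more times. Each further application of a left-invariant $Y_{j_\ell}$ (for $j_\ell \ne i$, the case $j_\ell = i$ being handled by the chain rule in $t$ as above) hits the convolution factor $I_\tau(\cdot)$ again, and via $\Ad(e^{tY_i})$ and Lemma~\ref{lemmaJerison31p} lands one more right-invariant $Y_k^R$ on the mollifier together with a power $(t/\tau)^{b}$; crucially, since $\nabla_b u$ has already absorbed one right-invariant derivative, the remaining $Y_k^R I_\tau(\cdot)$ is still just $\tau^{-1}$ times $I_\tau$ of a fixed $C^\infty_c$ function, so the bound $\tau^{-Q}\int_{B(g,K\tau)}\abs{\nabla_b u}$ propagates, now with an extra $\tau^{-1}$ for each of the $k-1$ further derivatives. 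Carrying out the slicing-and-H\"older estimate of Lemma~\ref{lemmaApproximation} on this quantity — $B(g,K\tau)$ meets $e^{sY_i}G_i$ only for $\abs{s}\le\kappa\tau$, $\nu$ of the slice is $\lesssim \tau^{Q-1}$ — gives $\lesssim \tau^{1/Q - k}M(I)(t)$; since throughout $\tau = \sqrt{\lambda^2+t^2}$ is \emph{fixed} (not integrated), there is no integration in $\tau$ here and no constraint is needed — one simply reads off $\abs{\nabla_b^k u_\lambda(ge^{tY_i})} \le C_k (\lambda^2+t^2)^{(1/Q-k)/2} M(I)(t)$. Finally the powers $(t/\tau)^{a+b+\cdots}$ collected along the way are all $\le 1$ since $\abs{t}\le\sqrt{\lambda^2+t^2}=\tau$, so they are harmless. (One should note $M(I)(t)$ rather than $M(I)(0)$ appears for general $t$; the stated form with $M(I)(0)$ is the case recorded, and the $t$-dependent version is what is actually used after translation in the proof of Lemma~\ref{lemmaHigher}.)

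The main obstacle is bookkeeping rather than conceptual: one must check that after transferring a right-invariant derivative onto $u$ in the \emph{first} step, all subsequent left-invariant derivatives genuinely keep falling on the mollifier (so that each costs only a clean factor $\tau^{-1}$ and never re-differentiates $\nabla_b u$), and that the commutator/$\Ad$ expansions — which are finite because $\mathfrak{g}$ is nilpotent — never generate a power of $t/\tau$ with the wrong sign or a derivative of $u$ of order higher than one. Organizing the induction on $k$ so that the inductive hypothesis is exactly ``$\nabla_b^{k}u_\lambda(ge^{tY_i})$ is a finite sum of terms $c\,(t/\tau)^{\ge 0}\,\tau^{-(k-1)}\bigl((Y_{k'}u)\ast I_\tau\theta\bigr)(g)$ with $\theta\in C^\infty_c(G)$'' makes the step routine, and then the desired estimate follows from the single scalar inequality $\tau^{-Q}\int_{B(g,K\tau)}\abs{\nabla_b u}\,\dif\mu \le C\tau^{1/Q}M(I)(t)$ established in Lemma~\ref{lemmaApproximation}, multiplied by the accumulated $\tau^{-(k-1)}$.
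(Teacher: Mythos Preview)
Your approach is essentially the paper's. Both use the same $u_\lambda$, so \eqref{ineqLinftyApproxHigher} is immediate; for \eqref{ineqHigherApprox} the paper writes $Y_{i_1}\dotsm Y_{i_k}u_\lambda(ge^{tY_i})=(u\ast\eta^t_{i_1\dotsm i_k})(g)$ and proves inductively that the kernel has the form $\eta^t_{i_1\dotsm i_l}=\sum_{r,j}\theta_r(t)\,I_\tau Y_j^R\eta_r^{(j)}$ with $\abs{\theta_r^{(\ell)}(t)}\le C(\lambda^2+t^2)^{-(r+\ell)/2}$, transferring the single $Y_j^R$ onto $u$ only at the end. This is exactly your inductive hypothesis with the integration by parts done last instead of first. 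The bookkeeping concern you flag is precisely what the paper's bounds on the derivatives $\theta_r^{(\ell)}$ record; note only that $\partial_t(t/\tau)^a$ also produces factors of $(\lambda/\tau)^2$, so your coefficients should be polynomials in both $t/\tau$ and $\lambda/\tau$, not just $(t/\tau)^{\ge 0}$ --- but both ratios are $\le 1$, so this is harmless.

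One point to correct: your parenthetical claim that $M(I)(t)$ rather than $M(I)(0)$ appears for general $t$ is mistaken. By definition $u_\lambda(ge^{tY_i})=(u\ast I_\tau\eta)(g)$ with $g\in G_i$; the parameter $t$ enters only through the scale $\tau=\sqrt{\lambda^2+t^2}$, and the convolution is always evaluated on $G_i$. Hence the ball supporting the mollifier is centred on the slice $G_i$ and meets $e^{sY_i}G_i$ only for $\abs{s}\le\kappa\tau$, so the slicing argument yields $M(I)(0)$ for \emph{every} $t$, exactly as stated and as used in the proof of Lemma~\ref{lemmaHigher}.
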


\begin{proof}[Proof of Lemma~\ref{lemmaApproximationHigher}]
Define $u_\lambda$ as in the proof of Lemma~\ref{lemmaApproximation}. One still has  \eqref{ineqLinftyApproxHigher}.

Now let us prove \eqref{ineqHigherApprox}. Let ${i_1},\dotsc, {i_k}\in \{1,\dotsc,m\}$. One has
\[
  Y_{i_1} \dotsm Y_{i_k} u_\lambda(ge^{tY_i})
  = (u \ast \eta^t_{i_1 \dotsm i_k})(g)\;,
\]
where $\eta^t_{i_1 \dotsm i_l}$ is defined recursively by $\eta^t=I_{\sqrt{\lambda^2+t^2}} \eta$ and
\[
  \eta^t_{i_1 \dotsm i_{l+1}}
=\begin{cases}
   \frac{\partial}{\partial t} \eta^t_{i_2 \dotsm i_{l+1}} & \text{if $i_{1}=i$,}\\
   [\Ad ({e^{tY_i}})Y_{i_1}] \eta^t_{i_2\cdots i_{l+1}} & \text{if $i_{1}\ne i$.}
 \end{cases}
\]
We now claim that for every $l \ge 0$ and $i_1,\dotsc, i_l \in \{1, \dotsc,m\}$, there exists $q \ge 1$, $\eta_r^{(j)} \in C^\infty_c(G)$ and $\theta_r \in C^\infty(\R^+)$, with $1 \le r \le q$ and $1 \le j \le m$ such that 
\begin{equation}
\label{etaRepresentation}
   \eta^t_{i_1 \dotsm i_l}=\sum_{\substack{0 \le r \le q\\ 1 \le j \le m}} \theta_r(t) I_{\sqrt{\lambda^2+t^2}} Y_j^R\eta^{(j)}_{p}\;,
\end{equation}
where 
\[
  \theta_r^{(k)} (t)\le \frac{C_{i_1 \dotsm i_l,r,k}}{(\lambda^2+t^2)^\frac{r+k}{2}}\;.
\]
Note that the constants $C_{i_1 \dotsm i_l,r,k}$ are independent of $t$ and $\lambda$.

Indeed, for $l=1$, \eqref{etaRepresentation} follows respectively from \eqref{eqYiulambda} together with Lemma~\ref{lemmaJerison31p}, and from \eqref{eqYjulambda}.
Assume now that \eqref{etaRepresentation} holds for $l \ge 1$. One has in particular
\begin{equation*}
   \eta^t_{i_2 \dotsm i_{l+1}}=\sum_{\substack{0 \le r \le q\\ 1 \le j \le m}} \theta_r I_{\sqrt{\lambda^2+t^2}} Y_j^R\eta^{(j)}_{p}\;.
\end{equation*}
If $i_1=i$, one has, by Lemma~\ref{lemmaJerison31p},
\[
\begin{split}
 \eta^t_{i_1i_2 \dotsm i_{l+1}}&=\frac{\partial}{\partial t} \sum_{\substack{0 \le r \le q\\ 1 \le j \le m}} \theta_r(t) I_{\sqrt{\lambda^2+t^2}} Y_j^R\eta^{(j)}_r \\
&=\sum_{\substack{0 \le r \le q\\ 1 \le j \le m}} \theta_r'(t) I_{\sqrt{\lambda^2+t^2}} Y_j^R\eta^{(j)}_r+\sum_{\substack{0 \le p \le q\\ 1 \le j \le m}} \theta_r(t) \frac{\partial }{\partial t} I_{\sqrt{\lambda^2+t^2}} Y_j^R\eta^{(j)}_r\\
&=\sum_{\substack{0 \le r \le q\\ 1 \le j \le m}} \theta_r'(t) I_{\sqrt{\lambda^2+t^2}} Y_j^R\eta^{(j)}_r+\sum_{\substack{0 \le r \le q\\  1 \le k \le m}} \theta_r(t) \frac{t}{\lambda^2+t^2} I_{\sqrt{\lambda^2+t^2}} Y_k^R \tilde{\eta}_r^{(k)}\;,
\end{split}
\]
where
\[
  \tilde{\eta}_r^{(k)}=D^{(k)}\sum_{1 \le j \le m} Y_j^R\eta^{(j)}_r\;,
\]
from which \eqref{etaRepresentation} follows.
If $i_1 \ne i$, by Lemma~\ref{lemmaJerison31p} again
\[
\begin{split}
 \eta^t_{i_1i_2 \cdots i_{l+1}}&=\tfrac{1}{t}[\delta_t \Ad ({e^{Y_i}}) Y_{i_1}] \sum_{\substack{0 \le r \le q\\ 1 \le j \le m}} \theta_r(t) I_{\sqrt{\lambda^2+t^2}} Y_j^R\eta^{(j)}_r \\
&=\sum_{\substack{0 \le r \le q\\ 1 \le j \le m}} \theta_r(t) I_{\sqrt{\lambda^2+t^2}} Y_j^R\Tilde{\eta}^{(j)}_r\;,
\end{split}
\]
where
\[
 \Tilde{\eta}^{(j)}_r=\sum_{\substack {1 \le k \le m\\ 1 \le s \le p}} \frac{t^{s-1}}{(\lambda^2+t^2)^\frac{s}{2}} D^{(j)}_{i_1}  Y^R_k \eta_r^k\;.
\]
Thus \eqref{etaRepresentation} is established, and brings us in position to conclude as in the proof of Lemma~\ref{lemmaApproximation} that 
\[
  \abs{X_{i_1} \dotsm X_{i_j} u_\lambda(ge^{tY_i})}
  = \abs{(u \ast \eta^t_{i_1 \dotsm i_k})(g)}
  \le C(\lambda^2+t^2)^{\frac{1}{2Q}-\frac{k}{2}} M(I)(0)\;.\qedhere
\]
\end{proof}

We end this section by the proof of Lemma~\ref{lemmaHigher}.

\begin{proof}[Proof of Lemma~\ref{lemmaHigher}]
As in the proof of Theorem~\ref{mainTheorem}, we need to estimate
\[
  \int_{G_1} F_{1 \dotsm 1}u_\lambda\dif \nu
\]
were $u_\lambda$ is now given by Lemma~\ref{lemmaApproximationHigher} instead of Lemma~\ref{lemmaApproximation}.
One has 
\[
\begin{split}
  \int_{G_1} F_{1 \dotsm 1}u_\lambda\dif \nu=&\int_{G_1}\int_{-\infty}^0 F_{1 \dotsm 1}(he^{sY_1}) \frac{\partial^k}{\partial s^{k}}\Bigl[\frac{s^{k-1}}{(k-1)!} u_\lambda(he^{sY_1})\Bigr]\dif s\dif \nu(h) \\
  &+(-1)^k\int_{G_1} \int_{-\infty}^0 \frac{s^k}{k!} u_\lambda(he^{sY_1})\frac{\partial^k}{\partial s^k}F_{1 \dotsm 1}(he^{sY_1})\dif s\dif \nu(h)\;.
\end{split}
\]
The first term gives
\begin{multline*}
\int_{G_1}\int_{-\infty}^0 F_{1 \dotsm 1}(he^{sY_1}) \frac{\partial^k}{\partial s^{k}}\Bigl[\frac{s^{k-1}}{(k-1)!} u_\lambda(he^{sY_1})\Bigr]\dif s\dif \nu(h)\\
=\sum_{l=0}^k \binom{k}{l}\int_{G_1}\int_{-\infty}^0 F_{1 \dotsm 1}(he^{sY_1}) \frac{s^{l-1}}{(l-1)!} Y_1^lu_\lambda(he^{sY_1})\dif s\dif \nu(h)\;.
\end{multline*}
By Lemma~\ref{lemmaApproximationHigher}, one has
\begin{multline*}
 \int_{G_1}\int_{-\infty}^0 F_{1 \dotsm 1}(he^{sY_1}) \frac{s^{l-1}}{(l-1)!} Y_1^lu_\lambda(he^{sY_1})\dif s\dif \nu(h) \\
 \le C \int_{-\infty}^0 \Norm{F_{1\dots 1}}_{\mathrm{L}^1(e^{sY_1} G_1)} \frac{s^{l-1}}{(\lambda^2+s^2)^{\frac{l}{2}-\frac{1}{2Q}}}M(I)(0)\dif s \\
 \le C' \lambda^{\frac{1}{Q}-1}\Norm{F_{1 \dotsm 1}}_{\mathrm{L}^1(G)} M(I)(0)\;.
\end{multline*}
For the other term, by the assumption \eqref{condHigherOrder}, one has
\begin{multline*}
 \int_{G_1} \int_{-\infty}^0 \frac{s^{k-1}}{(k-1)!} u_\lambda(he^{sY_1})\frac{\partial^k}{\partial s^k}F_{1 \dotsm 1}(he^{sY_1})\dif s\dif \nu(h) \\
=-\sum_{(i_1,\cdots,i_k)\ne (1, \dots,1)}  \int_{G_1} \int_{-\infty}^0\frac{s^{k-1}}{(k-1)!}u_\lambda(he^{sY_1})\\
Y_{i_1}\dots Y_{i_k} F_{i_1 \dots i_k}(he^{sY_1})\dif \nu(h)\dif s\;.
\end{multline*}
One has then 
\begin{multline*}
 \int_{G_1} \int_{-\infty}^0\frac{s^{k-1}}{(k-1)!}u_\lambda(he^{sY_1})Y_{i_1}\dots Y_{i_k} F_{i_1 \dots i_k}(he^{sY_1})\dif \nu(h)\dif s\\
 =(-1)^k \int_{G_1} \int_{-\infty}^0F_{i_1 \dots i_k}(he^{sY_1}) \Hat{Y}_{i_k}\dots \Hat{Y}_{i_1}\Bigl[\frac{s^{k-1}}{(k-1)!}u_\lambda\Bigr](he^{sY_1}) \dif \nu(h)\dif s\;,
\end{multline*}
where $\Hat{Y}_j=\frac{\partial}{\partial s}+Y_1$ if $j=1$ and $\Hat{Y}_j=Y_j$ otherwise. One obtains then as previously
\begin{multline*}
\Bigl\lvert\int_{G_1} \int_{-\infty}^0F_{i_1 \dots i_k}(he^{sY_1}) \Hat{Y}_{i_k}\dots \Hat{Y}_{i_1}\Bigl[\frac{s^{k-1}}{(k-1)!}u_\lambda\Bigr](he^{sY_1}) \dif \nu(h)\dif s \Bigr\rvert  \\
\le C \Norm{F}_{\mathrm{L}^1(G)}M(I)(0) \lambda^{\frac{1}{Q}-1}\;.
\end{multline*}
The proof ends as the proof of Theorem~\ref{mainTheorem}.
\end{proof}

\providecommand{\bysame}{\leavevmode\hbox to3em{\hrulefill}\thinspace}


\begin{thebibliography}{10}

\bibitem{BourgainBrezis2004}
J.~Bourgain and H.~Brezis, \emph{New estimates for the {L}aplacian, the
  div-curl, and related {H}odge systems}, C. R. Math. Acad. Sci. Paris
  \textbf{338} (2004), no.~7, 539--543.

\bibitem{BourgainBrezis2007}
\bysame, \emph{New estimates for elliptic equations and {H}odge type systems},
  J. Eur. Math. Soc. (JEMS) \textbf{9} (2007), no.~2, 277--315.

\bibitem{BrezisVanSchaftingen2007}
H.~Brezis and J.~Van~Schaftingen, \emph{Boundary estimates for elliptic systems
  with {$L\sp 1$}-data}, Calc. Var. Partial Differential Equations \textbf{30}
  (2007), no.~3, 369--388.

\bibitem{Folland1975}
G.~B. Folland, \emph{Subelliptic estimates and function spaces on nilpotent
  {L}ie groups}, Ark. Mat. \textbf{13} (1975), no.~2, 161--207.

\bibitem{FollandStein1974}
G.~B. Folland and E.~M. Stein, \emph{Estimates for the {$\bar \partial \sb{b}$}
  complex and analysis on the {H}eisenberg group}, Comm. Pure Appl. Math.
  \textbf{27} (1974), 429--522.

\bibitem{VagiGatto1999}
A.~E. Gatto and S.~V{\'a}gi, \emph{On {S}obolev spaces of fractional order and
  {$\epsilon$}-families of operators on spaces of homogeneous type}, Studia
  Math. \textbf{133} (1999), no.~1, 19--27.

\bibitem{HanSwayer1994}
Y.~S. Han and E.~T. Sawyer, \emph{Littlewood-{P}aley theory on spaces of
  homogeneous type and the classical function spaces}, Mem. Amer. Math. Soc.
  \textbf{110} (1994), no.~530, vi+126.

\bibitem{Jerison1986}
D.~Jerison, \emph{The {P}oincar\'e inequality for vector fields satisfying
  {H}\"ormander's condition}, Duke Math. J. \textbf{53} (1986), no.~2,
  503--523.

\bibitem{RotschildStein1976}
L.~P. Rothschild and E.~M. Stein, \emph{Hypoelliptic differential operators and
  nilpotent groups}, Acta Math. \textbf{137} (1976), no.~3-4, 247--320.

\bibitem{Stein1993}
E.~M. Stein, \emph{Harmonic analysis: real-variable methods, orthogonality, and
  oscillatory integrals}, Princeton Mathematical Series, vol.~43, Princeton
  University Press, Princeton, NJ, 1993.

\bibitem{VanSchaftingenHigher}
J.~Van~Schaftingen, \emph{Estimates for {$L\sp 1$}-vector fields under
  higher-order differential conditions}, submitted for publication.

\bibitem{VanSchaftingen2004}
\bysame, \emph{Estimates for {$L\sp 1$}-vector fields}, C. R. Math. Acad. Sci.
  Paris \textbf{339} (2004), no.~3, 181--186.

\end{thebibliography}
\end{document}